\pgfplotsset{compat=1.18}
\definecolor{uuuuuu}{rgb}{0.27,0.27,0.27}
\definecolor{sqsqsq}{rgb}{0.1255,0.1255,0.1255}
\newtheorem{definition}{Definition} [section]
\newtheorem{theorem}[definition]{Theorem}
\newtheorem{lemma}[definition]{Lemma}
\newtheorem{proposition}[definition]{Proposition}
\newtheorem{problem}[definition]{Problem}
\newtheorem{fact}[definition]{Fact}
\newcommand{\norm}[1]{\left\lVert#1\right\rVert}
\newsavebox\myboxA
\newsavebox\myboxB
\newlength\mylenA
\newcommand*\xoverline[2][0.75]{%
    \sbox{\myboxA}{$\m@th#2$}%
    \setbox\myboxB\null
    \ht\myboxB=\ht\myboxA%
    \dp\myboxB=\dp\myboxA%
    \wd\myboxB=#1\wd\myboxA
    \sbox\myboxB{$\m@th\overline{\copy\myboxB}$}
    \setlength\mylenA{\the\wd\myboxA}
    \addtolength\mylenA{-\the\wd\myboxB}%
    \ifdim\wd\myboxB<\wd\myboxA%
       \rlap{\hskip 0.5\mylenA\usebox\myboxB}{\usebox\myboxA}%
    \else
        \hskip -0.5\mylenA\rlap{\usebox\myboxA}{\hskip 0.5\mylenA\usebox\myboxB}%
    \fi}
\begin{document}
\title{\bf\Large Tur\'{a}n density of tight cycles minus one edge in the $\ell_2$-norm}
\date{\today}
\author[1]{Levente Bodn\'ar\thanks{Research supported by ERC Advanced Grant 101020255. Email: \texttt{bodnalev@gmail.com}}}
\author[2]{Jinghua Deng\thanks{Email: \texttt{Jinghua\_deng@163.com}}}
\author[2]{Jianfeng Hou\thanks{Research supported by National Key R\&D Program of China (Grant No. 2023YFA1010202), the Central Guidance on Local Science and Technology Development Fund of Fujian Province (Grant No. 2023L3003). Email: \texttt{jfhou@fzu.edu.cn}}}
\author[3]{Xizhi Liu\thanks{Email: \texttt{xizhi.liu.ac@gmail.com}}}
\author[2]{Hongbin Zhao\thanks{Email: \texttt{hbzhao2024@163.com}}}
\affil[1]{Mathematics Institute and DIMAP, University of Warwick, Coventry, CV4 7AL, UK}
\affil[2]{Center for Discrete Mathematics, Fuzhou University, Fujian, 350003, China}
\affil[3]{School of Mathematical Sciences, 
            University of Science and Technology of China,
            Hefei, Anhui, 230026, China}
\maketitle
\begin{abstract}
      The $3$-uniform tight $\ell$-cycle minus one edge $C_{\ell}^{3-}$ is the $3$-graph on $\ell$ vertices consisting of $\ell-1$ consecutive triples in the cyclic order. 
      We show that for every integer $\ell \ge 5$ satisfying $\ell\not\equiv 0\pmod3$, every $C_{\ell}^{3-}$-free $3$-graph whose $\ell_2$-norm, that is,  the sum of codegree squares, is close to the maximum must be structurally close to the iterative blowup of a single triple.
      This confirms a conjecture of Balogh--Clemen--Lidick\'{y}~{\cite[Conjecture~3.5]{BCL22a}} in a stronger form.
\end{abstract}

\section{Introduction}\label{SEC:Introduction}
Given an integer $r\ge 2$, an \textbf{$r$-uniform hypergraph} (henceforth an \textbf{$r$-graph}) $\mathcal{H}$ is a collection of $r$-subsets of some set $V$. 
We call $V$ the \textbf{vertex set} of $\mathcal{H}$ and denote it by $V(\mathcal{H})$. 
The size of $V(\mathcal{H})$ is denoted by $v(\mathcal{H})$.
We identify a hypergraph $\mathcal{H}$ with its set of edges, and hence, $|\mathcal{H}|$ represents the number of edges in $\mathcal{H}$. 

Given an $r$-graph $\mathcal{H}$, the \textbf{shadow} of $\mathcal{H}$ is defined as 
\begin{align*}
    \partial\mathcal{H}
    \coloneqq \left\{e\in \binom{V(\mathcal{H})}{r-1} \colon \text{there exists an edge $E\in \mathcal{H}$ containing $e$} \right\}. 
\end{align*}

For an $(r-1)$-set $T \subseteq V(\mathcal{H})$, the \textbf{neighborhood} of $T$ in $\mathcal{H}$ is defined as 
\begin{align*}
    N_{\mathcal{H}}(T) 
    \coloneqq \left\{v\in V(\mathcal{H}) \colon T \cup \{v\} \in \mathcal{H}\right\}, 
\end{align*}
and the \textbf{codegree} of $T$ is $d_{\mathcal{H}}(T) \coloneqq |N_{\mathcal{H}}(T)|$. 

Following the definitions in~\cite{BCL22b,CILLP24}, for every real number $p \ge 1$, the \textbf{$\ell_p$-norm} of an $r$-graph $\mathcal{H}$ is defined as 
\begin{align*}
    \norm{\mathcal{H}}_{p}
    \coloneqq \sum_{e\in \partial\mathcal{H}} d^{p}_{\mathcal{H}}(e), 
\end{align*}
where $d^{p}_{\mathcal{H}}(e)$ is an abbreviation for $\left(d_{\mathcal{H}}(e)\right)^{p}$. 
Note that for $p=1$, we have $\norm{\mathcal{H}}_{1} = r \cdot |\mathcal{H}|$.

 
Given a family $\mathcal{F}$ of $r$-graphs, an $r$-graph $\mathcal{H}$ is \textbf{$\mathcal{F}$-free}
if it does not contain any member of $\mathcal{F}$ as a subgraph.
The \textbf{$\ell_p$-norm Tur\'{a}n number} $\mathrm{ex}_{\ell_p}(n, \mathcal{F})$ of $\mathcal{F}$ is the maximum $\ell_p$-norm of an $\mathcal{F}$-free $r$-graph $\mathcal{H}$ on $n$ vertices. 
The \textbf{$\ell_p$-norm Tur\'{a}n density} of $\mathcal{F}$ is defined as 
\begin{align*}
    \pi_{\ell_p}(\mathcal{F})
    \coloneqq \lim_{n \to \infty} \frac{\mathrm{ex}_{\ell_p}(n, \mathcal{F})}{n^{r-1+p}}. 
\end{align*}
The existence of this limit follows from a simple averaging argument which can be found in such as~\cite{KNS64},~{\cite[Proposition~1.8]{BCL22b}}, and~{\cite[Proposition~2.2]{CILLP24}}.

Recall that the ordinary Tur\'{a}n number $\mathrm{ex}(n, \mathcal{F})$ of $\mathcal{F}$ is the maximum number of edges in an $n$-vertex $\mathcal{F}$-free $r$-graph.
The corresponding Tur\'{a}n density is defined as $\pi(\mathcal{F}) \coloneqq \lim_{n\to \infty}\mathrm{ex}(n, \mathcal{F})/\binom{n}{r}$. 
Note that the $\ell_1$-norm Tur\'{a}n number of $\mathcal{F}$ is simply $r$ times its ordinary Tur\'{a}n number, and $\pi(\mathcal{F}) = (r-1)! \cdot \pi_{\ell_1}(\mathcal{F})$.  

For $r=2$ (i.e., graphs), the value of $\pi(\mathcal{F})$ is well understood thanks to the celebrated general theorem of Erd\H{o}s--Stone~\cite{ES46} (see also~\cite{ES66}), which extends Tur\'{a}n's seminal theorem on complete graphs~\cite{Tur41} to arbitrary graph families.
The study of $\pi_{\ell_p}(\mathcal{F})$ for $p > 1$ was initiated by Caro--Yuster~\cite{CY00,CY00arxiv}, and Erd\H{o}s--Stone-type results in this setting were later obtained by Bollob\'{a}s--Nikiforov~\cite{BN12}. 

For $r \ge 3$, determining $\pi(\mathcal{F})$ is already notoriously difficult in general, despite significant effort devoted to this area. 
For results up to~2011, we refer the reader to the excellent survey by Keevash~\cite{Kee11}. 
Very recently, Balogh--Clemen--Lidick\'{y}~\cite{BCL22a,BCL22b} initiated the study of $\pi_{\ell_p}(\mathcal{F})$ for hypergraph families. 
Among their many results, they determined the values of $\pi_{\ell_2}(K_{4}^{3})$ and $\pi_{\ell_2}(K_{5}^{3})$, utilizing computer-assisted flag algebra computations, a powerful tool first introduced by~\cite{Raz07}. 
These results are particularly interesting given the notorious difficulty of determining $\pi(K_{\ell}^{3})$ for any $\ell \ge 4$, a problem originally posed by Tur\'{a}n~\cite{Tur41}. 
The $\ell_p$-norm Tur\'{a}n problems for hypergraph have since been explored more systematically in recent works such as~\cite{CL24,CILLP24,GLMP24}.

In this paper, we focus on the following classical object in hypergraph Tur\'an theory. 
For an integer $\ell \ge4$, the \textbf{tight $\ell$-cycle} $C_{\ell}^{3}$ is the $3$-graph on $[\ell]$ with edge set 
\begin{align*}
    \big\{ \{1,2,3\},~\cdots,~\{\ell-2, \ell-1, \ell\},~\{\ell-1, \ell, 1\},~\{\ell, 1,2\} \big\},
\end{align*}
that is, we take all consecutive triples in the cyclic order on $[\ell]$. The \textbf{tight $\ell$-cycle minus one edge} $C_{\ell}^{3-}$ is the $3$-graph on $[\ell]$ with edge set 
\begin{align*}
    \big\{ \{1,2,3\},~\cdots,~\{\ell-2, \ell-1, \ell\},~\{\ell-1, \ell, 1\} \big\},
\end{align*}
that is, $C_{\ell}^{3-}$ is obtained from $C_{\ell}^{3}$ by removing one edge. 

If $\ell \equiv 0 \pmod{3}$ (i.e.\ $\ell$ is divisible by $3$), then  both $C_{\ell}^{3}$ and $C_{\ell}^{3-}$ are $3$-partite and thus it holds that $\pi(C_{\ell}^{3}) = \pi(C_{\ell}^{3-})=0$ by the classical general result of Erd\H{o}s~\cite{Erd64KST}.
Partially inspired by the method used by Kam{\v c}ev--Letzter--Pokrovskiy~\cite{KLP24} for the Tur\'{a}n problem of $C_{\ell}^{3}$, Balogh--Luo~\cite{BL24tightcycle} proved that $\pi(C_{\ell}^{3-}) = 1/4$ for all sufficiently large $\ell$ satisfying $\ell \not\equiv 0 \pmod{3}$. 
Very recently, Lidick\'y--Mattes--Pfender~\cite{LMP24}, and independently Bodn\'ar--Le\'on--Liu--Pikhurko~\cite{BLLP24} proved that $\pi(C_{\ell}^{3-}) = 1/4$ for every $\ell \ge 5$ satisfying $\ell \not\equiv 0 \pmod{3}$.

Recall the following $C_{\ell}^{3-}$-free construction from~\cite{MPS11}. 
For $n \in \{0, 1,2\}$, the $n$-vertex $T_{\mathrm{rec}}$-construction is the empty $3$-graph on $n$ vertices. 
For $n \ge 3$, an $n$-vertex $3$-graph $\mathcal{H}$ is a \textbf{$T_{\mathrm{rec}}$-construction} if there exists a partition $V_1 \cup V_2 \cup V_3 = V(\mathcal{H})$ into non-empty parts such that $\mathcal{H}$ is obtained from $\mathcal{K}[V_1,V_2,V_3]$, the complete $3$-partite $3$-graph with parts $V_1, V_2, V_3$, by adding a copy of $T_{\mathrm{rec}}$-construction into each $V_i$ for $i \in [3]$. 
A $3$-graph is a \textbf{$T_{\mathrm{rec}}$-subconstruction} if it a subgraph of some $T_{\mathrm{rec}}$-construction.
It is easy to see that every $T_{\mathrm{rec}}$-subconstruction is $C_{\ell}^{3-}$-free for every integer $\ell \not\equiv 0 \pmod{3}$.

Let $t^{rec}_{2}(n)$ denote the maximum $\ell_{2}$-norm of an $n$-vertex $T_{\mathrm{rec}}$-construction. 
According to the definition, for every $n\geq3$, we have:
\begin{align*}
    t^{rec}_{2}(n)
    = \max\left\{n_1n_2n_3n + \sum_{i\in[3]}t^{rec}_{2}(n_i) \colon \text{$n_1+n_2+n_3 = n$ and $n_i \ge 1$ for $i \in [3]$} \right\}.
\end{align*}
Simple calculations show that $t^{rec}_{2}(n)=(1/26 + o(1))n^4$.  

In~{\cite[Conjecture~3.5]{BCL22a}}, Balogh--Clemen--Lidick{\'y} conjectured that the lower bound $1/26$, given by the $3$-partite recursive constructions described above, is also the upper bound for $\pi_{\ell_2}(C_{5}^{3-})$, that is, $\pi_{\ell_2}(C_{5}^{3-}) = 1/26$. 
The main result of this work is a confirmation of their conjecture, along with the establishment of an Erd\H os--Simonovits--type stability theorem~\cite{Erdos67a,Sim68} for $C_{\ell}^{3-}$-free $3$-graphs in the $\ell_2$-norm.

\begin{theorem}\label{THM:Main-result-C5Minus-L2Norm}
    Let $\ell \ge 5$ be an integer satisfying $\ell \not\equiv 0 \pmod{3}$. 
    Then $\pi_{\ell_2}(C_{\ell}^{3-}) = 1/26$. 
    Moreover, for every $\varepsilon>0$ there exist $\delta$ and $n_0$ such that the following holds for every $n\ge n_0$. 
    Suppose that $\mathcal{H}$ is an $n$-vertex $C_{\ell}^{3-}$-free $3$-graph with $\norm{\mathcal{H}}_2 \ge (1/26-\delta)n^4$. 
    Then $\mathcal{H}$ is a $T_{\mathrm{rec}}$-subconstruction after removing at most $\varepsilon n^3$ edges.
\end{theorem}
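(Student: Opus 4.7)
The lower bound $\pi_{\ell_2}(C_{\ell}^{3-}) \ge 1/26$ follows from the $T_{\mathrm{rec}}$-construction, which is $C_{\ell}^{3-}$-free because $\ell \not\equiv 0 \pmod 3$ implies that $C_{\ell}^{3-}$ is not $3$-partite, and whose $\ell_2$-norm satisfies $t^{rec}_{2}(n) = (1/26 + o(1))n^4$ by solving the recurrence at the balanced split $n_1 = n_2 = n_3 = n/3$. The substantive content is the matching upper bound together with stability, which I plan to prove simultaneously by induction on $n$.

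The starting point is the identity
\[
\norm{\mathcal{H}}_{2} \;=\; \sum_{\{u,v,w\}\in \mathcal{H}} \bigl(d_{\mathcal{H}}(\{u,v\}) + d_{\mathcal{H}}(\{u,w\}) + d_{\mathcal{H}}(\{v,w\}) \bigr),
\]
obtained by switching the order of summation, which recasts $\norm{\mathcal{H}}_{2}$ as a codegree-weighted edge count. The inductive step will produce a top-level $3$-partition $V(\mathcal{H}) = V_1 \cup V_2 \cup V_3$ with the properties that (a) each $\mathcal{H}[V_i]$ is $C_{\ell}^{3-}$-free, so the induction hypothesis gives $\norm{\mathcal{H}[V_i]}_{2} \le t^{rec}_{2}(n_i) + o(n_i^4)$; (b) all but $o(n^3)$ edges of $\mathcal{H}$ are cross edges with respect to this partition; and (c) cross edges have their three pair-codegrees bounded by the sizes of the corresponding complementary parts, up to a negligible exceptional set. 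Summing the cross and internal contributions via the identity above reproduces the recurrence $t^{rec}_{2}(n) = n\cdot n_1 n_2 n_3 + \sum_i t^{rec}_{2}(n_i)$, and optimizing over $n_1 + n_2 + n_3 = n$ yields $(1/26 + o(1))n^4$.

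To extract the top-level partition I would invoke the edge-density stability for $C_{\ell}^{3-}$-free $3$-graphs due to Bodn\'ar--Le\'on--Liu--Pikhurko and Lidick\'y--Mattes--Pfender. This requires the preliminary reduction that near-extremality in $\ell_2$-norm forces near-extremality in edge count: $\norm{\mathcal{H}}_{2} \ge (1/26-\delta)n^4$ must imply $|\mathcal{H}| \ge (1/24 - \delta')n^3$. The reduction is delicate because $\norm{\mathcal{H}}_{2}$ can a priori be inflated by a small set of very high-codegree pairs (as in a star $3$-graph), where the edge count is far from extremal. To rule this out I would prove a structural lemma to the effect that in a $C_{\ell}^{3-}$-free $3$-graph, the number of pairs $p$ with $d_{\mathcal{H}}(p) \ge (1+\eta) n/3$ is $o(n^2)$, by exhibiting a copy of $C_{\ell}^{3-}$ inside the configuration formed by many such high-codegree pairs sharing vertices via a tight-path-like extension argument in the shadow graph.

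The main obstacle is precisely this structural lemma reducing $\ell_2$-extremality to $\ell_1$-extremality --- a second-moment-weighted strengthening of the arguments underlying $\pi(C_{\ell}^{3-}) = 1/4$. Once it is in place, the rest of the argument proceeds in a standard fashion: the edge-density stability yields a near-balanced $3$-partition; cross-edge contributions are bounded by $n \cdot n_1 n_2 n_3 + o(n^4)$ using the codegree bounds; induction on each $\mathcal{H}[V_i]$ closes the recurrence; and convexity of the recurrence at its balanced optimum translates an edit-distance deviation $\varepsilon n^3$ from a $T_{\mathrm{rec}}$-subconstruction into a positive deficit in the $\ell_2$-norm, giving the quantitative stability conclusion.
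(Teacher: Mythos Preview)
Your overall architecture (find a top-level $3$-partition, bound cross contributions, recurse into the parts) matches the paper's, but the plan breaks down precisely at the step you flag as the main obstacle: reducing $\ell_2$-near-extremality to $\ell_1$-near-extremality. Your proposed structural lemma is quantitatively too weak. Suppose you succeed in showing that all but $o(n^2)$ pairs satisfy $d_{\mathcal{H}}(p)\le (1+\eta)\,n/3$. Then
\[
\norm{\mathcal{H}}_2 \;\le\; (1+\eta)\tfrac{n}{3}\sum_p d_{\mathcal{H}}(p) + o(n^4)
\;=\; (1+\eta)\,n\,|\mathcal{H}| + o(n^4),
\]
which from $\norm{\mathcal{H}}_2\ge (1/26-\delta)n^4$ yields only $|\mathcal{H}|\gtrsim n^3/26$. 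But the edge-Tur\'an threshold you need in order to invoke the $\ell_1$-stability of~\cite{LMP24,BLLP24} is $|\mathcal{H}|\ge (1/24-o(1))n^3$, a constant-fraction gap (note $1/24-1/26=1/312$). No sharpening of the codegree cutoff helps: in the extremal $T_{\mathrm{rec}}$ itself the maximum codegree is exactly $n/3$, yet $|T_{\mathrm{rec}}|\approx n^3/24$ while $\norm{T_{\mathrm{rec}}}_2\approx n^4/26$, so the inequality above is genuinely slack at the extremum. In other words, ``$\ell_2$-near-extremal $\Rightarrow$ $\ell_1$-near-extremal'' is a \emph{consequence} of the theorem you are trying to prove, not an input to it, and the codegree-tail route cannot close this loop.

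The paper sidesteps this reduction entirely. It reformulates $\norm{\mathcal{H}}_2$ as (a constant times) the $\mathbb{S}_2$-count, reduces to $\{K_4^{3-},C_5^{3-}\}$-free $3$-graphs via homomorphism plus removal, and then uses flag algebra \emph{directly under the $\mathbb{S}_2$-density hypothesis} to obtain (i) a locally maximal $3$-partition with large transversal edge density (Proposition~\ref{PROP:max-cut-ratio-lower-bound}), and (ii) the key new inequality $|B\mathbb{S}_2|\le \tfrac{9}{10}|M\mathbb{S}_2|+o(n^4)$ comparing \emph{bad and missing copies of $\mathbb{S}_2$}, not bad and missing edges (Proposition~\ref{PROP:BS2-vs-09MS2}). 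This $\mathbb{S}_2$-level comparison is what makes the recursion close at the correct constant $1/26$; an edge-level comparison (which is what your $\ell_1$-stability import would provide) controls the wrong quantity. So the missing idea in your plan is to work with the second-moment object ($\mathbb{S}_2$) throughout rather than trying to descend to the first moment.
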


Our proofs crucially use the flag algebra machinery developed by Razborov~\cite{Raz07} and are computer-assisted. 
More specifically, we adopt the strategy used in the previous work~\cite{BLLP24} for determining the Tur\'{a}n density of $C_{5}^{3-}$, which is inspired by the strategy of Balogh--Hu--Lidick{\'y}--Pfender used in~\cite{BHLP16C5} for determining the inducibility of the $5$-cycle $C_5$ (where asymptotically extremal graphs are also obtained via a recursive construction). 

Some standard reductions (see the discussion in Section~\ref{SEC:Prelim}) show that Theorem~\ref{THM:Main-result-C5Minus-L2Norm} is equivalent to Theorem~\ref{THM:C5Minus-S2}, which concerns the density of $\mathbb{S}_{2}$ (the unique $2$-edge $3$-graph on $4$ vertices) in $\{K_{4}^{3-}, C_{5}^{3-}\}$-free $3$-graphs.
Note that due to the $\{K_{4}^{3-}, C_{5}^{3-}\}$-freeness, every copy of $\mathbb{S}_{2}$ must now be an induced copy.
Our proof for Theorem~\ref{THM:Main-result-C5Minus-L2Norm} is based on the following crucial claims about a $\{K_{4}^{3-}, C_{5}^{3-}\}$-free $3$-graph $\mathcal{H}$ with $n \to \infty$ vertices and $\mathbb{S}_{2}$-density close to the maximum value $6/13$.
\begin{enumerate}[label=(\roman*)]
    \item\label{it:A} Proposition~\ref{PROP:max-cut-ratio-lower-bound} shows that there exists a vertex partition $V_1\cup V_2\cup V_3=V(\mathcal{H})$ such that $|\mathcal{H}\cap \mathcal{K}[V_1,V_2,V_3]| \ge (0.198... + o(1))\binom{n}{3}$ (which is not too far from the upper bound $(n/3)^3 = (0.222...+o(1)) \binom{n}{3}$).
    By moving vertices between parts, we may further assume that this partition is \textbf{locally maximal}, meaning that no single-vertex move between parts increases the number of \textbf{transversal edges}, that is, edges in $\mathcal{H}\cap \mathcal{K}[V_1,V_2,V_3]$. 
    It is worth noting that the locally maximal property is crucial for the claims that follow.
    \item\label{it:B} Proposition~\ref{PROP:B-vs-075M} shows that when comparing $\mathcal{H} \setminus \bigcup_{i=1}^3 \mathcal{H}[V_i]$ with the complete $3$-partite $3$-graph $\mathcal{K}[V_1,V_2,V_3]$, the number of additional edges (referred to as \textbf{bad edges}) is at most $0.75$ times the number of missing edges. The constant $0.75$ is crucial for our proof, in previous work~{\cite[Proposition~3.3]{BLLP24}}, this constant was $0.99$.
    \item\label{it:C} Proposition~\ref{PROP:BS2-vs-09MS2} shows that, ignoring the copies of $\mathbb{S}_{2}$ contained within the parts $V_i$ for $i \in [3]$, the number of copies of $\mathbb{S}_{2}$ in $\mathcal{H}$ is strictly less than the number in $\mathcal{K}[V_1,V_2,V_3]$, unless $\mathcal{H} \setminus \bigcup_{i=1}^3 \mathcal{H}[V_i] = \mathcal{K}[V_1,V_2,V_3]$. 
\end{enumerate}

Thus have identified a top-level partition such that, ignoring copies of $\mathbb{S}_{2}$ inside parts, $\mathcal{H}$ does not perform better than a copy of $T_{\mathrm{rec}}$ that uses the same parts. 
We can recursively apply this result to each part $\mathcal{H}[V_i]$ as long $|V_i|$ is sufficiently large. 
Now, routine calculations imply that the number of $\mathbb{S}_{2}$ in $\mathcal{H}$ is at most $(6/13 + o(1))\binom{n}{4}$.

In Section~\ref{SEC:Prelim}, we present the necessary definitions and preliminary results. 
The proofs of Theorem~\ref{THM:C5Minus-S2}~\ref{THM:C5Minus-S2-a} and~\ref{THM:C5Minus-S2-b}, which together imply Theorem~\ref{THM:Main-result-C5Minus-L2Norm}, are presented in Sections~\ref{SEC:proof-S2-density} and~\ref{SEC:proof-S2-stability}, respectively. 
Finally, Section~\ref{SEC:Remark} contains some concluding remarks.

\section{Preliminaries}\label{SEC:Prelim}
For pairwise disjoint sets $V_1, \ldots, V_{\ell}$, we use $\mathcal{K}[V_1, \ldots, V_{\ell}]$ to denote the complete $\ell$-partite $\ell$-graph with parts $V_1, \ldots, V_{\ell}$. In particular, $\mathcal{K}[V_1, V_2, V_3]$ denote the complete $3$-partite $3$-graph with parts $V_1, V_2, V_3$. 
Denote by $K[V_1, \ldots, V_{\ell}]$ the complete $\ell$-partite $2$-graph with parts $V_1, \ldots, V_{\ell}$.

Given a $3$-graph $\mathcal{H}$, the \textbf{link} of a vertex $v \in V(\mathcal{H})$ is given by 
\begin{align*}
    L_{\mathcal{H}}(v)
    \coloneqq \left\{e \in \partial\mathcal{H} \colon \{v\} \cup e \in \mathcal{H}\right\}.
\end{align*}
The \textbf{degree} of $v$ is $d_{\mathcal{H}}(v) \coloneqq |L_{\mathcal{H}}(v)|$. 

For every integer $k \ge 1$, the $k$-blowup $\mathcal{H}^{(k)}$ of $\mathcal{H}$ is the $3$-graph whose vertex set is the union $\bigcup _{v \in V(\mathcal{H})} U_{v}$ of some disjoint $k $-sets $U_{v}$, one per each vertex $v \in V(\mathcal{H})$, and whose edge set is the union of the complete 3-partite $3$-graphs $\mathcal K[U_{x}, U_{y}, U_{z}]$ over all edges $\{x, y, z\} \in \mathcal{H}$. Informally speaking, $\mathcal{H}^{(k)}$ is obtained from $\mathcal{H}$ by cloning each vertex $k$ times.

Given a $3$-graph $F$ on $k$ vertices, the \textbf{(induced) density} $p(F,\mathcal{H})$ of $F$ in $\mathcal{H}$ is the number of $k$-subsets of $V(\mathcal{H})$ that span a subgraph isomorphic to $F$, divided by $\binom{v(\mathcal{H})}{k}$. 
When $F=K_3^3$ is the single edge, we get the \textbf{edge density} $\rho(\mathcal{H}) \coloneqq |\mathcal{H}|/\binom{v(\mathcal{H})}{3}$.

Let $\mathbb{S}_{2}$ be the unique $3$-graph on four vertices with exactly two edges. 
Denote by $\mathcal{N}(\mathbb{S}_2, \mathcal{H})$ the collection of all copies of $\mathbb{S}_{2}$ in a $3$-graph $\mathcal{H}$. 
Let $\mathrm{N}(\mathbb{S}_2, \mathcal{H}) \coloneqq |\mathcal{N}(\mathbb{S}_2, \mathcal{H})|$ denote the number of copies of $\mathbb{S}_2$ in $\mathcal{H}$.
The $\mathbb{S}_{2}$-density of $\mathcal{H}$ is defined as $\rho(\mathbb{S}_{2},\mathcal{H}) \coloneqq \mathrm{N}(\mathbb{S}_2, \mathcal{H})/\binom{v(\mathcal{H})}{4}$. 

For a family $\mathcal{F}$ of $3$-graphs, we define the generalized Tur\'{a}n number and generalized Tur\'{a}n density as follows:
\begin{align*}
    \mathrm{ex}(n,\mathbb{S}_{2}, \mathcal{F})
    & \coloneqq \max\big\{ \mathrm{N}(\mathbb{S}_2, \mathcal{H}) \colon \text{$v(\mathcal{H}) = n$ and $\mathcal{H}$ is $\mathcal{F}$-free} \big\} \quad\text{and}\\[0.3em]
    \pi(\mathbb{S}_2, \mathcal{F})
    & \coloneqq \lim_{n\to \infty} {\mathrm{ex}(n,\mathbb{S}_{2}, \mathcal{F})}/{\binom{n}{4}}. 
\end{align*}
Since for every $3$-graph $\mathcal{H}$, we have 
\begin{align*}
    \mathrm{N}(\mathbb{S}_2, \mathcal{H})
    = \sum_{e\in \partial\mathcal{H}} \binom{d_{\mathcal{H}}(e)}{2}
    =  \sum_{e\in \partial\mathcal{H}} \frac{d_{\mathcal{H}}^{2}(e) - d_{\mathcal{H}}(e)}{2}
    = \frac{\norm{\mathcal{H}}_{2} - 3|\mathcal{H}|}{2}, 
\end{align*}
it follows that for every family $\mathcal{F}$ of $3$-graphs,  
\begin{align}\label{equ:S2-density-vs-L2-density}
    \pi(\mathbb{S}_2, \mathcal{F})
    = 12 \cdot \pi_{\ell_2}(\mathcal{F}). 
\end{align}
Therefore, the $\ell_{2}$-norm Tur\'{a}n problem for $\mathcal{F}$ is asymptotically equivalent to the corresponding generalized Tur\'{a}n problem concerning $\mathbb{S}_{2}$. 

Given two $r$-graphs $\mathcal{H}$ and $\mathcal{G}$, a map $\psi \colon V(\mathcal{H})\to V(\mathcal{G})$ is a \textbf{homomorphism} from $\mathcal{H}$ to $\mathcal{G}$ if $\psi(e) \in \mathcal{G}$ for all $e\in \mathcal{H}$. 
Note that there is a homomorphism from $\mathcal{H}$ to $\mathcal{G}$ iff $\mathcal{H}$ is contained in some blowup of $\mathcal{G}$. 

Let $K_{4}^{3-}$ denote the $3$-graph on $\{1,2,3,4\}$ with edge set 
\begin{align*}
    \big\{\{1,2,3\},~\{1,2,4\},~\{1,3,4\}\big\}. 
\end{align*}
Observe that there exists a homomorphism from $C_{5}^{3-}$ to $K_{4}^{3-}$ (obtained by merging two vertices not contained in any edge), i.e., $C_{5}^{3-}$ is contained in some blowup of $K_{4}^{3-}$. 
Thus, by the Supersaturation Method (see e.g.~\cite[Proposition~1.9]{BCL22b}) and the Hypergraph Removal Lemma (see e.g.~\cite{G07,RNSS05,RS04}), every $n$-vertex $C_{5}^{3-}$-free $3$-graph can be made $K_{4}^{3-}$-free by removing $o(n^3)$ edges (and thus removes only $o(n^4)$ copies of $\mathbb{S}_{2}$). 

Similarly, for every integer $\ell \ge 5$ with $\ell \not\equiv 0 \pmod{3}$, there exists a homomorphism from $C_{\ell}^{3-}$ to $C_{5}^{3-}$ (see e.g.~{\cite[Claim~5.14]{BL24tightcycle}}). 
It follows that every $n$-vertex $C_{\ell}^{3-}$-free $3$-graph can be made $C_{5}^{3-}$-free by removing $o(n^4)$ copies of $\mathbb{S}_{2}$. 

Combining these observations with~\eqref{equ:S2-density-vs-L2-density}, we see that Theorem~\ref{THM:Main-result-C5Minus-L2Norm} reduces to the following result. 

\begin{theorem}\label{THM:C5Minus-S2}
    The following statements hold. 
    \begin{enumerate}[label=(\roman*)]
        \item\label{THM:C5Minus-S2-a} We have $\pi(\mathbb{S}_{2}, \{K_{4}^{3-}, C_{5}^{3-}\}) = 6/13$.
        \item\label{THM:C5Minus-S2-b} For every $\varepsilon>0$ there exist $\delta$ and $n_0$ such that the following holds for every $n\ge n_0$. 
        Suppose that $\mathcal{H}$ is an $n$-vertex $\{K_{4}^{3-}, C_{5}^{3-}\}$-free $3$-graph with $\mathrm{N}(\mathbb{S}_{2}, \mathcal{H}) \ge (6/13 - \delta)n^4$. 
        Then $\mathcal{H}$ is a $T_{\mathrm{rec}}$-subconstruction after removing at most $\varepsilon n^3$ edges.
    \end{enumerate}
\end{theorem}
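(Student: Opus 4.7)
The plan is to deduce Theorem~\ref{THM:C5Minus-S2} by iteratively applying Propositions~\ref{PROP:max-cut-ratio-lower-bound}, \ref{PROP:B-vs-075M}, and~\ref{PROP:BS2-vs-09MS2} to peel off a top-level $3$-partite structure, then solving the resulting recursion on $g(n) := \mathrm{ex}(n, \mathbb{S}_{2}, \{K_{4}^{3-}, C_{5}^{3-}\})$. Part~\ref{THM:C5Minus-S2-a} asks for $g(n) = (6/13 + o(1))\binom{n}{4}$, and part~\ref{THM:C5Minus-S2-b} will follow by tracking how much slack the recursion tolerates at each level.

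Given an $n$-vertex $\{K_{4}^{3-}, C_{5}^{3-}\}$-free $3$-graph $\mathcal{H}$, I first apply Proposition~\ref{PROP:max-cut-ratio-lower-bound} to obtain a locally maximal partition $V(\mathcal{H}) = V_1 \cup V_2 \cup V_3$ whose three parts all have linear size (since the $0.198\binom{n}{3}$ lower bound on transversal edges forces $|V_1||V_2||V_3| = \Theta(n^3)$). I then split the copies of $\mathbb{S}_{2}$ in $\mathcal{H}$ into those whose four vertices all lie in a single part $V_i$ (\emph{internal}) and the remainder (\emph{external}), so that
\[
    \mathrm{N}(\mathbb{S}_2, \mathcal{H})
    = \mathrm{N}_{\mathrm{ext}}(\mathbb{S}_2, \mathcal{H}) + \sum_{i=1}^{3} \mathrm{N}(\mathbb{S}_2, \mathcal{H}[V_i]).
\]
Proposition~\ref{PROP:BS2-vs-09MS2} bounds the external contribution by $\mathrm{N}(\mathbb{S}_{2}, \mathcal{K}[V_1,V_2,V_3])$, while each $\mathcal{H}[V_i]$ is itself $\{K_{4}^{3-}, C_{5}^{3-}\}$-free and hence contributes at most $g(|V_i|)$. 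Combining these with the direct calculation $\mathrm{N}(\mathbb{S}_{2}, \mathcal{K}[V_1,V_2,V_3]) = \tfrac{1}{2}\,n\,|V_1||V_2||V_3| + O(n^3)$ yields
\[
    g(n) \le \max_{n_1+n_2+n_3 = n}\left[\tfrac{1}{2}\,n \cdot n_1 n_2 n_3 + \sum_{i=1}^{3} g(n_i)\right] + O(n^3).
\]
A standard asymptotic analysis of this recursion (searching for a self-similar fixed point $g(n) = \alpha n^4 + o(n^4)$ and computing $\alpha = \max_{x_1+x_2+x_3=1} \frac{x_1 x_2 x_3}{2(1-\sum_{i} x_i^4)} = 1/52$, attained uniquely at $x_i = 1/3$) gives $g(n) \le (1/52 + o(1))n^4 = (6/13 + o(1))\binom{n}{4}$; the matching $T_{\mathrm{rec}}$-lower bound proves part~\ref{THM:C5Minus-S2-a}.

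For the stability statement~\ref{THM:C5Minus-S2-b}, assume $\mathrm{N}(\mathbb{S}_2, \mathcal{H}) \ge (6/13 - \delta)\binom{n}{4}$. The strict inequality in Proposition~\ref{PROP:BS2-vs-09MS2}, combined with the bound $B \le 0.75\,M$ from Proposition~\ref{PROP:B-vs-075M}, should be strengthened to a quantitative defect estimate of the form $\mathrm{N}(\mathbb{S}_{2}, \mathcal{K}[V_1,V_2,V_3]) - \mathrm{N}_{\mathrm{ext}}(\mathbb{S}_2, \mathcal{H}) \ge c(M+B)\,n$ for some absolute constant $c > 0$. Feeding this into the recursion and using strict concavity at the symmetric point, near-extremality of $\mathcal{H}$ forces, at the top level, both $M + B \le C\delta\,n^3$ and $|n_i - n/3| \le C\sqrt{\delta}\,n$. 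Recursing on each $V_i$ (the deficit parameter grows by only a constant factor per level) and summing the edge modifications as a geometric series $\sum_k 3^k \cdot O(\delta)(n/3^k)^3 = O(\delta)\,n^3$ over $O(\log n)$ levels, until all remaining parts drop below an $\varepsilon$-dependent threshold, yields a total modification of at most $\varepsilon n^3$ for $\delta$ sufficiently small; what remains is then a $T_{\mathrm{rec}}$-subconstruction.

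The main obstacle will be the quantitative upgrade of Proposition~\ref{PROP:BS2-vs-09MS2}: its flag-algebra proof naturally yields only a strict pointwise inequality, and promoting this to a uniform linear-in-$(M+B)$ defect bound that survives the slack permitted by the $0.75$ constant of Proposition~\ref{PROP:B-vs-075M} is delicate, and likely requires reopening the flag-algebra certificate. A related technical point is ensuring partition balance at every recursion level: the lower bound $0.198 \binom{n}{3}$ on transversal edges from Proposition~\ref{PROP:max-cut-ratio-lower-bound} is essential here, as without it the optimization over the $x_i$ admits a degenerate maximum in which one part nearly covers $V(\mathcal{H})$. The remaining steps — base cases of the recursion for small $|V_i|$, and absorbing the $o(n^4)$ error from the removal-lemma reduction of $C_\ell^{3-}$-freeness to $C_5^{3-}$-freeness — are routine.
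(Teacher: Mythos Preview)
Your plan is correct and matches the paper's approach closely: one level of partitioning via Propositions~\ref{PROP:max-cut-ratio-lower-bound}--\ref{PROP:BS2-vs-09MS2}, then the self-similar fixed-point computation for part~\ref{THM:C5Minus-S2-a}, and a recursive stability argument for part~\ref{THM:C5Minus-S2-b}. Two corrections are worth making.

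First, your stated ``main obstacle'' --- upgrading Proposition~\ref{PROP:BS2-vs-09MS2} to a quantitative defect bound --- is not an obstacle at all and requires no reopening of the flag-algebra certificate. Writing $\mathrm{N}_{\mathrm{ext}}(\mathbb{S}_2,\mathcal{H}) = \mathrm{N}(\mathbb{S}_2,\mathcal{K}[V_1,V_2,V_3]) - |M\mathbb{S}_2| + |B\mathbb{S}_2|$, Proposition~\ref{PROP:BS2-vs-09MS2} already gives $|M\mathbb{S}_2| - |B\mathbb{S}_2| \ge \tfrac{1}{10}|M\mathbb{S}_2| - o(n^4)$; since each $e\in M$ lies in $n-3$ members of $M\mathbb{S}_2$ and each such member contains at most two missing triples, $|M\mathbb{S}_2|\ge\tfrac12|M|(n-3)$, and combining with $|B|\le\tfrac34|M|$ from Proposition~\ref{PROP:B-vs-075M} yields exactly the linear defect $c(|M|+|B|)n$ you want. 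The paper packages this as Lemma~\ref{LEMMA:recursive-upper-bound} (carrying the slack $\max\{|B\mathbb{S}_2|/9,\,|M\mathbb{S}_2|/10\}$) and Lemma~\ref{LEMMA:weak-stability}.

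Second, your geometric-series bookkeeping is off: the per-level deficit multiplies by a constant $C$ (the paper gets $C=2400$) while part sizes shrink by at most a factor $2$, so the level-$k$ edge-removal contribution is of order $(C/9)^k\delta n^3$, which diverges. The fix is not to sum to infinity but to stop after $O(\log(1/\varepsilon))$ levels, once parts drop below $\varepsilon n$, and take $\delta$ polynomially small in~$\varepsilon$; the paper does this via an induction on $m\le n$ with deficit parameter $\delta(n/m)^{12}$ and $\delta=\varepsilon^{13}/1200$. Relatedly, for part~\ref{THM:C5Minus-S2-a} note that Proposition~\ref{PROP:max-cut-ratio-lower-bound} carries the hypothesis $\rho(\mathbb{S}_2,\mathcal{H})\ge 6/13-10^{-6}$, so take $\mathcal{H}$ extremal and bound each $\mathrm{N}(\mathbb{S}_2,\mathcal{H}[V_i])$ by $(\alpha+o(1))|V_i|^4/24$ directly, rather than recursing (which would require first verifying the density hypothesis on $\mathcal{H}[V_i]$).
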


Recall that the \textbf{standard $2$-dimensional simplex} is defined as  
\begin{align*}
    \Delta^{2}
    \coloneqq \left\{\left(x_{1}, x_{2}, x_{3}\right) \in \mathbb{R}^{3} \colon x_{1}+x_{2}+x_{3}=1 \text{ and } x_{i} \geq 0 \text{ for } i \in[3]\right\}.
\end{align*}
The following inequalities can be verified using elementary analysis.
\begin{fact}\label{FACT:inequalities}
    The following inequalities hold for every $(x_{1}, x_{2}, x_{3}) \in \Delta^{2}$. 
    \begin{enumerate}[label=(\roman*)]
        \item\label{FACT:inequalities-1} Suppose that $\min\{x_1, x_2, x_3\} > 0$. Then 
        \begin{align*}
            \frac{x_1x_2x_3}{1-(x_1^4+x_2^4+x_3^4)}
            \le \frac{1}{26}. 
        \end{align*}
        %
        %
        \item\label{FACT:inequalities-3} Suppose that $\min\{x_1,x_2,x_3\} \ge 1/5$. Then 
        \begin{align*}
            x_1x_2x_3+\frac{x_1^4+x_2^4+x_3^4}{26}
            \le \frac{1}{26}- \frac{1}{15}\sum_{i\in [3]} \left(x_i-\frac{1}{3}\right)^2.
        \end{align*}
    \end{enumerate}
\end{fact}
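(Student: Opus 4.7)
The plan for part~\ref{FACT:inequalities-1} is to clear the denominator and rewrite the inequality equivalently as
\begin{align*}
    26\, x_1 x_2 x_3 + x_1^4 + x_2^4 + x_3^4 \le (x_1+x_2+x_3)^4.
\end{align*}
Expanding the right-hand side by the multinomial theorem, cancelling $\sum_i x_i^4$, and normalizing $x_1+x_2+x_3=1$, this becomes
\begin{align*}
    4\sum_{i\ne j} x_i^3 x_j + 6\sum_{i<j} x_i^2 x_j^2 \ge 14\, x_1 x_2 x_3.
\end{align*}
Expressing both sides through the elementary symmetric polynomials $e_2=\sum_{i<j}x_ix_j$ and $e_3=x_1x_2x_3$ via Newton's identities yields the equivalent form $2 e_2 - e_2^2 - 15 e_3 \ge 0$. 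The Maclaurin/Newton inequalities $e_2 \le e_1^2/3 = 1/3$ and $e_2^2 \ge 3 e_3$ then give
\begin{align*}
    2 e_2 - e_2^2 - 15 e_3 \ge 2 e_2 - e_2^2 - 5 e_2^2 = 2 e_2(1 - 3 e_2) \ge 0,
\end{align*}
with equality iff $x_1=x_2=x_3=1/3$, as required.

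For part~\ref{FACT:inequalities-3}, the plan is to shift coordinates by setting $y_i := x_i - 1/3$, so that $\sum_i y_i = 0$ and $Q := \sum_i y_i^2 = \sum_i(x_i-1/3)^2$ is exactly the quantity appearing on the right. Two key identities on the hyperplane $\{\sum_i y_i = 0\}$ will drive the simplification: $\sum_i y_i^3 = 3\, y_1 y_2 y_3$ (immediate) and $\sum_i y_i^4 = Q^2/2$ (verified by writing $y_3 = -(y_1+y_2)$ and expanding, noting that $(\sum_i y_i^2)^2 = 2 \sum_i y_i^4$ on this hyperplane). Substituting these into the inequality and clearing denominators reduces the statement to the polynomial inequality
\begin{align*}
    15\, Q^2 + 900\, y_1 y_2 y_3 \le 58\, Q.
\end{align*}
On the feasible set $\{x_i \ge 1/5,\ \sum_i x_i = 1\}$, the convex function $Q$ is maximized at the vertices of the corresponding triangle, giving $Q_{\max} = 8/75$, attained at permutations of $(1/5, 1/5, 3/5)$. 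When $y_1 y_2 y_3 \le 0$, the inequality is immediate since $15 Q \le 15 \cdot 8/75 < 58$. When $y_1 y_2 y_3 > 0$, I invoke the sharp bound $y_1 y_2 y_3 \le Q^{3/2}/(3\sqrt{6})$ (the maximum of $y_1 y_2 y_3$ subject to $\sum y_i = 0$ and $\sum y_i^2 = Q$, attained when two of the $y_i$ coincide at $-\sqrt{Q/6}$), which reduces the statement to the single-variable inequality $15 Q + 50\sqrt{6}\,\sqrt{Q} \le 58$ that is routinely verified on $[0, 8/75]$.

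The main obstacle is the algebraic reduction in part~\ref{FACT:inequalities-3} to a polynomial inequality in the two invariants $Q$ and $y_1y_2y_3$, together with locating the sharp bound $y_1y_2y_3 \le Q^{3/2}/(3\sqrt{6})$ on the zero-sum hyperplane; once these are in place, both parts close via elementary single-variable calculus, and the constraint $x_i \ge 1/5$ enters only through the bound $Q \le 8/75$.
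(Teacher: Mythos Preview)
Your proposal is correct. The paper itself gives no proof of Fact~\ref{FACT:inequalities} beyond the single sentence ``can be verified using elementary analysis,'' so there is nothing to compare against; your argument fills in what the authors leave to the reader.

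For the record, your reductions check out: in~\ref{FACT:inequalities-1} the symmetric-function rewriting to $2e_2 - e_2^2 - 15e_3 \ge 0$ is accurate, and the chain $15e_3 \le 5e_2^2$ (Newton) together with $e_2 \le 1/3$ (Maclaurin) closes it cleanly. In~\ref{FACT:inequalities-3} the substitution $y_i = x_i - 1/3$ and the identities $\sum y_i^3 = 3y_1y_2y_3$, $\sum y_i^4 = Q^2/2$ on $\{\sum y_i = 0\}$ do reduce the claim exactly to $15Q^2 + 900\,y_1y_2y_3 \le 58Q$; the bound $y_1y_2y_3 \le Q^{3/2}/(3\sqrt{6})$ is sharp on the zero-sum sphere, and at $Q = Q_{\max} = 8/75$ one gets $15Q + 50\sqrt{6}\sqrt{Q} = 8/5 + 40 = 41.6 < 58$, so the monotone one-variable check goes through with room to spare. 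The constraint $x_i \ge 1/5$ is used only to cap $Q$, exactly as you say.
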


We will also use the following theorem, which states that the Tur\'{a}n density of $\{K_{4}^{3-}, C_{5}^{3-}\}$ is $1/4$.
In other words, in every $n$-vertex $\{ K_{4}^{3-}, C_{5}^{3-} \}$-free $3$-graph, the ratio of edges to non-edges is at most $1/3 + o(1)$. 
\begin{theorem}[\cite{LMP24,BLLP24}]\label{THM:BLLP24-C5Minus-Turan-Density}
    We have $\pi(\{K_{4}^{3-}, C_{5}^{3-}\}) = 1/4$. 
\end{theorem}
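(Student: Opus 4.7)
The plan is to handle the lower and upper bounds separately. For the lower bound, the iterative balanced $3$-partite construction $T_{\mathrm{rec}}$ (from the introduction, with three equal parts recursed indefinitely) is $\{K_4^{3-}, C_\ell^{3-}\}$-free for every $\ell \not\equiv 0 \pmod 3$: the $C_\ell^{3-}$-freeness is as already observed in the introduction, while the $K_4^{3-}$-freeness is an easy induction on the recursion depth, since every vertex link in $T_{\mathrm{rec}}$ is the union of a complete bipartite graph $K[V_i,V_j]$ with a triangle-free link inherited from the inner recursion, and a quick case check rules out any triangle crossing the two pieces. A self-similar calculation shows that the edge density $\rho$ satisfies $\rho = 2/9 + \rho/9$, hence $\rho = 1/4$, giving $\pi(\{K_4^{3-}, C_5^{3-}\}) \ge 1/4$.

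For the upper bound $\pi(\{K_4^{3-}, C_5^{3-}\}) \le 1/4$, the plan is to use Razborov's flag algebra method. The useful structural observation is that $K_4^{3-}$-freeness is equivalent to every vertex link being triangle-free, which sharply constrains the local density profile; the additional $C_5^{3-}$-freeness is needed to push the bound down from the weaker estimate one gets purely from triangle-free links. I would fix a flag size $N$ (say $N \in \{6,7,8\}$), enumerate all $\{K_4^{3-}, C_5^{3-}\}$-free flags on up to $N$ vertices together with the relevant small types $\sigma$, and search for a semidefinite flag-algebra certificate expressing $1/4 - \rho(K_3^3)$ as a non-negative combination of $\sigma$-averages of squares, with each underlying matrix positive semidefinite. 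Evaluating the resulting identity in any $\{K_4^{3-}, C_5^{3-}\}$-free limit object immediately yields $\rho \le 1/4$.

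The main obstacle is producing the SDP certificate. This is a computer-assisted step: enumerate the flags, assemble the SDP, solve it numerically to high precision, and then rigorously round the floating-point solution to rationals while preserving positive semidefiniteness. If $N$ is too small, the SDP can only certify a bound strictly greater than $1/4$, and pushing $N$ up until the extremal pattern $T_{\mathrm{rec}}$ becomes tight causes the SDP to grow rapidly in size; one must also be careful that the rounding step does not destroy the strict inequality. Since this exact strategy was successfully executed by two independent groups~\cite{LMP24,BLLP24}, the approach is known to be feasible, but the certificate itself is the technical core of the proof.
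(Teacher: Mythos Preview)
Your proposal is correct in both content and approach, but note that the paper does not itself prove this theorem: it is quoted as a known result from~\cite{LMP24,BLLP24}, with no proof given in the present paper. Your sketch accurately reconstructs the strategy of those references---the lower bound via the $T_{\mathrm{rec}}$ construction (your link-triangle argument and the self-similar density computation $\rho = 2/9 + \rho/9$ are both correct), and the upper bound via a computer-verified flag algebra SDP certificate---so there is nothing to compare against within this paper, and nothing to correct in your outline.
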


\section{Computer-generated results}\label{SEC:Flag-Algebra}
In this section, we present the results derived by computer using the flag algebra method of Razborov~\cite{Raz07}, which is also described in e.g.~\cite{Razborov10,BaberTalbot11,SFS16,GilboaGlebovHefetzLinialMorgenstein22}.
Since this method is well-known by now, we will be very brief. In particular,  we omit many definitions, referring
the reader to~\cite{Raz07,Razborov10} for any missing notions. Roughly speaking, a \textbf{flag algebra proof using $0$-flags on $m$ vertices} of an upper bound $u\in\mathbb{R}$ on the given objective function $f$ 
consists of an identity 
\begin{align*}
    u-f(\mathcal{H})
    = \mathrm{SOS}+\sum_{F\in\mathcal{F}_m^0}c_F \cdot p(F,\mathcal{H})+o(1),
\end{align*}
which is asymptotically true for any admissible $\mathcal{H}$ with $|V(\mathcal{H})|\to\infty$, where the $\mathrm{SOS}$-term can be represented as a sum of squares (as described e.g. in~{\cite[Section~3]{Razborov10}}), each coefficient $c_F\in\mathbb{R}$ is non-negative, and $\mathcal{F}_m^0$ consists of isomorphism types of \textbf{$0$-flags} (unlabelled $3$-graphs) with $m$ vertices. If $f(\mathcal{H})$ can be represented as a linear combination of the densities of members of $\mathcal{F}_m^0$ in $\mathcal{H}$ then finding the smallest possible $u$ amounts to solving a semi-definite program (SDP) with $|\mathcal{F}_m^0|$ linear constraints (so we write the size of $\mathcal{F}_m^0$ in each case to give the reader some idea of the size of the programs that we had to solve).

We formed the corresponding SDPs and then analyzed the solutions returned by computer, using a modified version of the SageMath package. 
This package is still under development, for short guide on how to install it and its current functionality can be found in the GitHub repository \href{https://github.com/bodnalev/sage}{\url{https://github.com/bodnalev/sage}}. The calculations used for this paper and the generated certificates can be found in \href{https://drive.google.com/drive/folders/175ciwbvhYCeCQmub2mVzslRxopIHZAl6?usp=share_link}{\url{https://drive.google.com/drive/folders/175ciwbvhYCeCQmub2mVzslRxopIHZAl6?usp=share_link}}.
We did not try to reduce the set of the used types needed for the proofs, although we did use the (standard) observation of Razborov~~{\cite[Section~4]{Razborov10}} that each unknown SDP matrix can be assumed to consist of $2$ blocks (namely, the invariant and anti-invariant parts).


For an $n$-vertex $3$-graph $\mathcal{H}$ define the \textbf{max-cut ratio} to be 
\begin{align*}
    \mu_{\mathcal{H}}(V_1, V_2, V_3)
    \coloneqq \frac{6}{n^{3}} \cdot \max\big\{\left|\mathcal{H} \cap \mathcal{K}\left[V_{1}, V_{2}, V_{3}\right]\right| \colon V_{1}, V_{2}, V_{3} \text{ form a partition of } V(\mathcal{H}) \big\}.
\end{align*}

The following result shows that every $\{K_{4}^{3-}, C_{5}^{3-}\}$-free $3$-graph $\mathcal{H}$ on $n$ vertices with large  $\mathbb{S}_{2}$-density must have a large max-cut ratio, that is, a large $3$-partite subgraph.  

\begin{proposition}\label{PROP:max-cut-ratio-lower-bound}\label{pr2}
     Suppose that $\mathcal{H}$ is an $n$-vertex $\{K_{4}^{3-}, C_{5}^{3-}\}$-free $3$-graph with $\rho(\mathbb{S}_{2}, \mathcal{H}) \ge \beta_{\ref{pr2}}$, where $\beta_{\ref{pr2}} \coloneqq 6/13 - 10^{-6}$. 
     Then $\mathcal{H}$ has the max-cut ratio $\mu_{\mathcal{H}}(V_1, V_2, V_3)$ at least $\alpha_{\ref{pr2}}$, where 
     \begin{align}\label{equ:alpha-32}
        \alpha_{\ref{pr2}}
        \coloneqq \frac{675468913113}{3407872000000} 
        = 0.198208....
    \end{align}
\end{proposition}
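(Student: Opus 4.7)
The plan is to prove the contrapositive: if every partition of $V(\mathcal{H})$ yields transversal-edge density strictly below $\alpha_{\ref{pr2}}$, then $\rho(\mathbb{S}_2, \mathcal{H})$ is strictly below $\beta_{\ref{pr2}}$. Because $\mu_{\mathcal{H}}$ is defined via a maximum over partitions, the natural flag-algebra setting is the category of $\{K_{4}^{3-}, C_{5}^{3-}\}$-free $3$-graphs whose vertex sets carry a distinguished $3$-coloring; in this setting the transversal-edge density (with respect to the coloring) is a linear combination of $3$-vertex colored-flag densities and can therefore be constrained directly as a linear functional in the flag algebra.

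Within this colored category I would aim to produce, for some $\lambda \ge 0$, a Lagrangian-form flag-algebra identity
\[
    \beta_{\ref{pr2}} - \rho(\mathbb{S}_2, \mathcal{H}) + \lambda \bigl(\text{transversal edge density} - \alpha_{\ref{pr2}}\bigr) = \mathrm{SOS} + \sum_{F \in \mathcal{F}^{0}_m} c_F \, p(F, \mathcal{H}) + o(1),
\]
with $c_F \ge 0$ and $\mathcal{F}^{0}_m$ ranging over isomorphism types of colored $m$-vertex admissible $3$-graphs. Applied to an admissible $\mathcal{H}$ endowed with its own max-cut partition as a coloring, the transversal density equals $\mu_{\mathcal{H}} + o(1)$, and nonnegativity of the $\mathrm{SOS}$ and $c_F p(F, \mathcal{H})$ terms rearranges to $\rho(\mathbb{S}_2, \mathcal{H}) \le \beta_{\ref{pr2}} - \lambda(\alpha_{\ref{pr2}} - \mu_{\mathcal{H}}) + o(1)$. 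Under the standing assumption $\mu_{\mathcal{H}} < \alpha_{\ref{pr2}}$ this is strictly below $\beta_{\ref{pr2}}$ for $n$ large, contradicting the hypothesis.

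Operationally, I would feed the colored admissible class into the SageMath package from Section~\ref{SEC:Flag-Algebra} with $m$ (likely $6$ or $7$) vertices, apply Razborov's invariant/anti-invariant block decomposition to trim the SDP, and optimize numerically over $\lambda \ge 0$; the specific rational constant $\alpha_{\ref{pr2}} = 675468913113/3407872000000$ would then emerge as the outcome of rounding the numerical optimum to an exact rational witness. The principal obstacle is not conceptual but numerical: producing an SDP solution of sufficient precision that the rationally rounded certificate strictly beats $6/13 - 10^{-6}$, and verifying that the rounded Gram matrices are positive semidefinite. A milder, secondary issue is the transfer from the asymptotic flag-algebra inequality to the finite-$n$ statement, but the explicit $10^{-6}$ gap built into $\beta_{\ref{pr2}}$ is precisely the slack designed to absorb the $o(1)$ error.
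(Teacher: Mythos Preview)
Your Lagrangian identity cannot exist. It is asserted in the colored theory with no constraint tying the coloring to $\mathcal{H}$, so it must hold for \emph{every} admissible colored $3$-graph. Feed in a $T_{\mathrm{rec}}$-construction on $n\to\infty$ vertices with the trivial coloring that puts every vertex into $V_1$: the transversal density is $0$ while $\rho(\mathbb{S}_2,\mathcal{H})\to 6/13$. Your identity then reads $\beta_{\ref{pr2}}-6/13+\lambda(0-\alpha_{\ref{pr2}})\ge 0$, i.e.\ $-10^{-6}-\lambda\,\alpha_{\ref{pr2}}\ge 0$, which fails for every $\lambda\ge 0$ (and a quick check with the optimal coloring of the same construction rules out $\lambda<0$ as well). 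The point is that a universal bound over all colorings can only give an \emph{upper} bound on the max-cut, never a lower bound; you are trying to certify the existence of a good partition, and a single inequality valid for every partition cannot do that.

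The route the paper invokes (via~\cite[Proposition~3.2]{BLLP24}) is to \emph{exhibit} a partition rather than quantify over colorings. Because $\mathcal{H}$ is $K_4^{3-}$-free, for any edge $\{a,b,c\}$ the three neighborhoods $N_{\mathcal{H}}(b,c)$, $N_{\mathcal{H}}(a,c)$, $N_{\mathcal{H}}(a,b)$ are pairwise disjoint, so each edge determines a (partial) $3$-partition of $V(\mathcal{H})$. One runs the flag algebra in the \emph{uncolored} theory, rooted at the type consisting of a single labeled edge, and lower-bounds the expected number of transversal triples with respect to this edge-induced partition, using the hypothesis $\rho(\mathbb{S}_2,\mathcal{H})\ge\beta_{\ref{pr2}}$ as a linear constraint. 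Averaging over the choice of root edge then produces some edge whose associated partition has transversal density at least $\alpha_{\ref{pr2}}$. The specific rational $\alpha_{\ref{pr2}}$ arises from rounding that edge-rooted SDP, not from a colored one.
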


Note that Proposition~\ref{PROP:max-cut-ratio-lower-bound} is nearly identical to~{\cite[Proposition~3.2]{BLLP24}}, except that we replace the edge density assumption $\rho(\mathcal{H}) \ge 1/4 - 10^{-6}$ with the $\mathbb{S}_{2}$-density assumption $\rho(\mathbb{S}_{2}, \mathcal{H}) \ge 6/13 - 10^{-6}$.
Since the proofs are essentially the same, we omit it here.

For a partition $V_1\cup V_2\cup V_3 = V \coloneqq V(\mathcal{H})$ of the vertex set of $\mathcal{H}$, define the sets of bad edges and missing triples: 
\begin{align}
    B_{\mathcal{H}}\left(V_{1}, V_{2}, V_{3}\right)
    & \coloneqq \left\{e \in \mathcal{H} \colon \left\{\left|e \cap V_{1}\right|,\left|e \cap V_{2}\right|,\left|e \cap V_{3}\right|\right\} = \{0,1,2\}\right\},\label{equ:DEF-BH}\\[0.5em]
    M_{\mathcal{H}}\left(V_{1}, V_{2}, V_{3}\right)
    & \coloneqq \left\{e \in \binom{V}{3} \setminus \mathcal{H} \colon \left\{\left|e \cap V_{1}\right|,\left|e \cap V_{2}\right|,\left|e \cap V_{3}\right|\right\} = \{1,1,1\}\right\}. \label{equ:DEF-MH}
\end{align}
Note that the edges inside a part are not classified as bad or missing.

Similarly, define the sets of bad and missing $\mathbb{S}_{2}$'s:
\begin{align}
   B\mathbb{S}_{2}^{\mathcal{H}}(V_1,V_2,V_3)
   & \coloneqq \left\{ S \in \mathcal{N}(\mathbb{S}_{2}, \mathcal{H})  \colon \text{$S$ contains some edge in $B_{\mathcal{H}}\left(V_{1}, V_{2}, V_{3}\right)$}\right\}, \label{equ:DEF-BS2}\\[0.5em]
   M\mathbb{S}_{2}^{\mathcal{H}}(V_1,V_2,V_3)
   & \coloneqq \mathcal{N}(\mathbb{S}_{2}, \mathcal{K}[V_1, V_2, V_3]) \setminus \mathcal{N}(\mathbb{S}_{2}, \mathcal{H}\cap \mathcal{K}[V_1, V_2, V_3])). \label{equ:DEF-MS2}
\end{align}
That is, ignoring the copies of $\mathbb{S}_{2}$ that are entirely contained in $V_i$ for $i \in [3]$, we define a bad $\mathbb{S}_{2}$ as one that includes at least one bad edge from $B_{\mathcal{H}}\left(V_{1}, V_{2}, V_{3}\right)$, and a missing $\mathbb{S}_{2}$ as one that appears in $\mathcal{K}[V_1, V_2, V_3]$ but not in $\mathcal{H}$. 
Similarly, the copies of $\mathbb{S}_{2}$ inside a part are not classified as bad or missing.

For convenience, we will omit $(V_1, V_2, V_3)$ and the subscript/superscript $\mathcal{H}$ when they are clear from context.

In~{\cite[Proposition~3.3]{BLLP24}}, it was shown that if  $V_1\cup V_2\cup V_3 = V(\mathcal{H})$ is a locally maximal partition of a $\{K_{4}^{3-}, C_{5}^{3-}\}$-free $3$-graph $\mathcal{H}$ such that $\mu_{\mathcal{H}}(V_1, V_2, V_3) \ge 0.19$, then $|B| - 0.99 |M| \le 0$. 
The following result improves the constant $0.99$ to $0.75$ under the slightly stronger assumption that $\mu_{\mathcal{H}}(V_1, V_2, V_3) \ge 0.198$. 
Since the proofs are the same, we omit it here. 

\begin{proposition}\label{PROP:B-vs-075M}\label{pr3}
    Suppose that $\mathcal{H}$ is an $n$-vertex $\{K_{4}^{3-}, C_{5}^{3-}\}$-free  $3$-graph and $V(\mathcal{H})=V_{1} \cup V_{2} \cup V_{3}$ is a locally maximal partition with $\mu_{\mathcal{H}}(V_1, V_2, V_3) \ge 0.198$. 
    Then with $B = B_{\mathcal{H}}(V_{1}, V_{2}, V_{3})$ and $M = M_{\mathcal{H}}(V_{1}, V_{2}, V_{3})$ as defined in~\eqref{equ:DEF-BH} and~\eqref{equ:DEF-MH}, we have 
    \begin{align*}
        |B|-\frac{3}{4}|M| \leq 0.
    \end{align*}
\end{proposition}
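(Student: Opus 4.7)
The proof I would pursue follows the flag algebra template of~\cite[Proposition~3.3]{BLLP24}, adjusted for the stronger constant $3/4$. I would work in the \emph{colored} flag algebra on $3$-vertex-colored $3$-graphs, where each vertex is colored $1$, $2$, or $3$ according to which of $V_{1},V_{2},V_{3}$ contains it. In this encoding both $|B|$ and $|M|$ become, up to the normalization $\binom{n}{3}$, fixed linear combinations of densities of colored $3$-vertex flags: a bad edge is a present triple with color multiset a $2{+}1$ pattern, and a missing triple is an absent triple with a rainbow color pattern. Consequently, the target inequality $|B|-\tfrac{3}{4}|M|\le 0$ is exactly a linear density inequality in colored flags.

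Next, I would translate each hypothesis into flag algebra constraints. The $\{K_{4}^{3-},C_{5}^{3-}\}$-freeness forbids every colored flag whose underlying (uncolored) $3$-graph contains $K_{4}^{3-}$ or $C_{5}^{3-}$. Local maximality of the partition means that, for every vertex $v\in V_{i}$ and every $j\neq i$, the number of edges of $\mathcal{H}$ through $v$ with the other two endpoints in the two parts different from $V_{i}$ is at least the corresponding count with $V_{i}$ replaced by $V_{j}$. Averaging over the root $v$ and encoding it as a colored $1$-flag yields a family of $1$-flag density inequalities. Finally, the bound $\mu_{\mathcal{H}}(V_{1},V_{2},V_{3})\ge 0.198$ becomes a single density inequality on the rainbow-colored edges.

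The objective is then to obtain a flag algebra certificate of the shape
\begin{align*}
    \tfrac{3}{4}|M|-|B|
    \;=\;\mathrm{SOS}+\sum_{i}\lambda_{i}L_{i}+\mu\bigl(\mu_{\mathcal{H}}(V_{1},V_{2},V_{3})-0.198\bigr)+o(1),
\end{align*}
where the $L_{i}$ range over the local-maximality $1$-flag inequalities, the coefficients $\lambda_{i},\mu\ge 0$, and $\mathrm{SOS}$ is a sum of squares on colored types of flags of some fixed order $m$, interpreted modulo the forbidden-subgraph constraints. Nonnegativity of every term on the right-hand side then asymptotically forces the left-hand side to be nonnegative, which gives the claim.

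I expect the principal obstacle to be computational rather than structural. Pushing the constant from the previous value $0.99$ all the way down to $3/4$ leaves considerably less slack in the SDP and will likely require a noticeably larger flag order $m$, together with a carefully pre-selected collection of colored types (and a careful allocation of multipliers $\lambda_{i}$ across the local-maximality inequalities). Rounding the numerical SDP solution to an exact rational positive-semidefinite certificate while preserving nonnegativity of all non-SOS coefficients is the usual delicate final step; the invariant/anti-invariant block decomposition of~\cite[Section~4]{Razborov10} should again be applied throughout to keep the program within reach.
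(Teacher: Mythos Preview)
Your proposal is correct and is precisely the approach the paper takes: the paper does not give a standalone proof but says ``Since the proofs are the same, we omit it here,'' referring to~\cite[Proposition~3.3]{BLLP24}. Your description of the colored flag algebra setup, the encoding of local maximality as rooted $1$-flag inequalities, and the max-cut ratio as a single linear constraint matches that template. One minor point: your concern that pushing $0.99$ down to $3/4$ will ``likely require a noticeably larger flag order'' appears to be overly pessimistic---the paper's remark that the proofs are the same suggests the existing setup (with the slightly stronger hypothesis $\mu_{\mathcal{H}}\ge 0.198$ in place of $0.19$) already certifies the improved constant without increasing~$m$.
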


\begin{figure}[H]
\centering
\tikzset{every picture/.style={line width=1pt}} 
\begin{tikzpicture}[x=0.75pt,y=0.75pt,yscale=-1,xscale=1,line join=round]
\draw   (182.39,84.6) .. controls (182.39,67.15) and (196.79,53) .. (214.54,53) .. controls (232.3,53) and (246.69,67.15) .. (246.69,84.6) .. controls (246.69,102.06) and (232.3,116.21) .. (214.54,116.21) .. controls (196.79,116.21) and (182.39,102.06) .. (182.39,84.6) -- cycle ;
\draw   (127.88,172.2) .. controls (127.88,154.74) and (142.28,140.59) .. (160.03,140.59) .. controls (177.79,140.59) and (192.18,154.74) .. (192.18,172.2) .. controls (192.18,189.65) and (177.79,203.8) .. (160.03,203.8) .. controls (142.28,203.8) and (127.88,189.65) .. (127.88,172.2) -- cycle ;
\draw   (236.9,172.2) .. controls (236.9,154.74) and (251.3,140.59) .. (269.05,140.59) .. controls (286.81,140.59) and (301.2,154.74) .. (301.2,172.2) .. controls (301.2,189.65) and (286.81,203.8) .. (269.05,203.8) .. controls (251.3,203.8) and (236.9,189.65) .. (236.9,172.2) -- cycle ;
\draw [pattern={Lines[angle=30,distance={2.3pt},line width=0.8pt]}, pattern color = uuuuuu, fill opacity=0.4]  (202.2,81.8) .. controls (206.2,123.8) and (222.2,152.8) .. (269.05,172.2) -- (160.03,172.2) .. controls (191.2,134.8) and (193.2,123.8) .. (202.2,81.8) ;
\draw  [pattern={dots}, pattern color = uuuuuu, fill opacity=0.4, dashed]  (227.2,82.8) .. controls (231.2,124.8) and (239.2,141.8) .. (269.05,172.2) -- (160.03,172.2) .. controls (208.2,148.8) and (218.2,124.8) .. (227.2,82.8) ;
\draw [fill=uuuuuu] (227.2,82.8) circle (1.3pt);
\draw [fill=uuuuuu] (269.05,172.2) circle (1.3pt);
\draw [fill=uuuuuu] (160.03,172.2) circle (1.3pt);
\draw [fill=uuuuuu] (202.2,81.8) circle (1.3pt);
\draw   (410.39,81.6) .. controls (410.39,64.15) and (424.79,50) .. (442.54,50) .. controls (460.3,50) and (474.69,64.15) .. (474.69,81.6) .. controls (474.69,99.06) and (460.3,113.21) .. (442.54,113.21) .. controls (424.79,113.21) and (410.39,99.06) .. (410.39,81.6) -- cycle ;
\draw   (355.88,169.2) .. controls (355.88,151.74) and (370.28,137.59) .. (388.03,137.59) .. controls (405.79,137.59) and (420.18,151.74) .. (420.18,169.2) .. controls (420.18,186.65) and (405.79,200.8) .. (388.03,200.8) .. controls (370.28,200.8) and (355.88,186.65) .. (355.88,169.2) -- cycle ;
\draw   (464.9,169.2) .. controls (464.9,151.74) and (479.3,137.59) .. (497.05,137.59) .. controls (514.81,137.59) and (529.2,151.74) .. (529.2,169.2) .. controls (529.2,186.65) and (514.81,200.8) .. (497.05,200.8) .. controls (479.3,200.8) and (464.9,186.65) .. (464.9,169.2) -- cycle ;
\draw  [pattern={dots}, pattern color = uuuuuu, fill opacity=0.4, dashed]  (430.2,78.8) .. controls (434.2,120.8) and (450.2,149.8) .. (497.05,169.2) -- (388.03,169.2) .. controls (419.2,131.8) and (421.2,120.8) .. (430.2,78.8) ;
\draw  [pattern={dots}, pattern color = uuuuuu, fill opacity=0.4, dashed]  (455.2,79.8) .. controls (459.2,121.8) and (467.2,138.8) .. (497.05,169.2) -- (388.03,169.2) .. controls (436.2,145.8) and (446.2,121.8) .. (455.2,79.8) ;
\draw [fill=uuuuuu] (430.2,78.8) circle (1.3pt);
\draw [fill=uuuuuu] (455.2,79.8) circle (1.3pt);
\draw [fill=uuuuuu] (497.05,169.2) circle (1.3pt);
\draw [fill=uuuuuu] (388.03,169.2) circle (1.3pt);
\end{tikzpicture}
\caption{Two types of missing $\mathbb{S}_{2}$ (there may be other edges not indicated in the figure).}
\label{fig:MS2}
\end{figure}
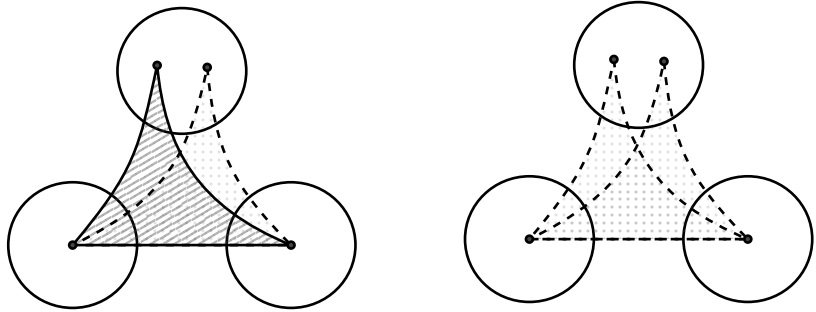

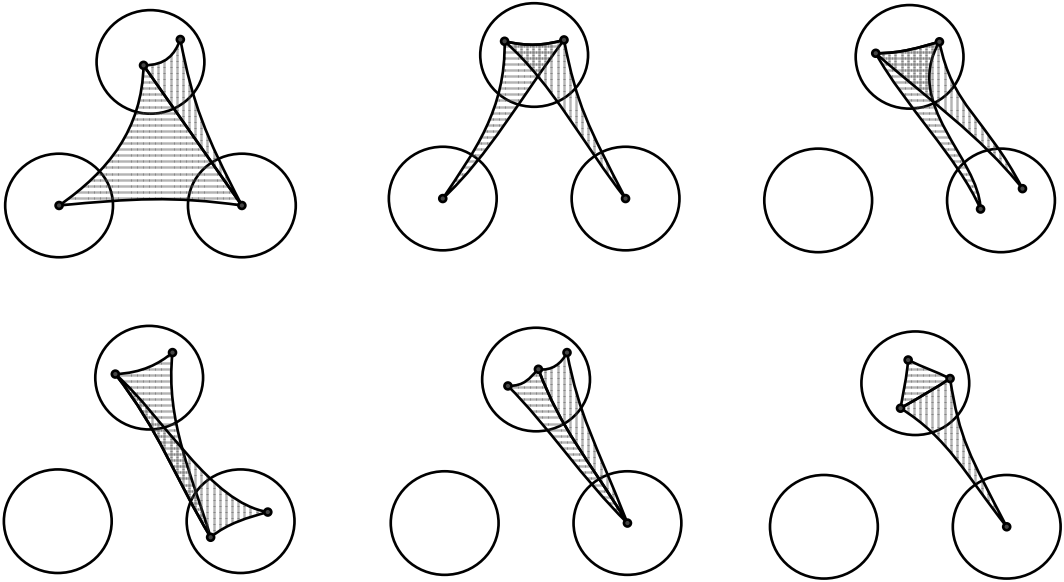
\begin{figure}[H]
\centering
\tikzset{every picture/.style={line width=1pt}} 
\begin{tikzpicture}[x=0.75pt,y=0.75pt,yscale=-1,xscale=1,line join=round]
\draw   (111.99,68.53) .. controls (111.99,54.15) and (124.03,42.48) .. (138.89,42.48) .. controls (153.75,42.48) and (165.8,54.15) .. (165.8,68.53) .. controls (165.8,82.92) and (153.75,94.58) .. (138.89,94.58) .. controls (124.03,94.58) and (111.99,82.92) .. (111.99,68.53) -- cycle ;
\draw   (66.37,140.74) .. controls (66.37,126.35) and (78.41,114.69) .. (93.27,114.69) .. controls (108.13,114.69) and (120.18,126.35) .. (120.18,140.74) .. controls (120.18,155.12) and (108.13,166.79) .. (93.27,166.79) .. controls (78.41,166.79) and (66.37,155.12) .. (66.37,140.74) -- cycle ;
\draw   (157.61,140.74) .. controls (157.61,126.35) and (169.66,114.69) .. (184.52,114.69) .. controls (199.38,114.69) and (211.42,126.35) .. (211.42,140.74) .. controls (211.42,155.12) and (199.38,166.79) .. (184.52,166.79) .. controls (169.66,166.79) and (157.61,155.12) .. (157.61,140.74) -- cycle ;
\draw  [pattern={Lines[angle=0,distance={2.3pt},line width=0.8pt]}, pattern color = uuuuuu, fill opacity=0.4]  (93.27,140.74) .. controls (117.78,123.57) and (134.73,102.38) .. (135.43,70.24) -- (184.52,140.74) .. controls (150.26,135.88) and (129.78,137.25) .. (93.27,140.74) ;
\draw  [pattern={Lines[angle=90,distance={2.3pt},line width=0.8pt]}, pattern color = uuuuuu, fill opacity=0.4]  (184.52,140.74) -- (135.43,70.24) .. controls (146.73,70.24) and (150.97,64.77) .. (153.8,57.25).. controls  (160.15,90.07) and (165.8,104.43) .. (184.52,140.74);
\draw [fill=uuuuuu] (184.52,140.74) circle (1.3pt);
\draw [fill=uuuuuu] (135.43,70.24) circle (1.3pt);
\draw [fill=uuuuuu] (153.8,57.25) circle (1.3pt);
\draw [fill=uuuuuu] (93.27,140.74) circle (1.3pt);
\draw   (303.5,65.05) .. controls (303.5,50.66) and (315.55,39) .. (330.41,39) .. controls (345.27,39) and (357.32,50.66) .. (357.32,65.05) .. controls (357.32,79.44) and (345.27,91.1) .. (330.41,91.1) .. controls (315.55,91.1) and (303.5,79.44) .. (303.5,65.05) -- cycle ;
\draw   (257.88,137.25) .. controls (257.88,122.87) and (269.93,111.2) .. (284.79,111.2) .. controls (299.65,111.2) and (311.7,122.87) .. (311.7,137.25) .. controls (311.7,151.64) and (299.65,163.3) .. (284.79,163.3) .. controls (269.93,163.3) and (257.88,151.64) .. (257.88,137.25) -- cycle ;
\draw   (349.13,137.25) .. controls (349.13,122.87) and (361.17,111.2) .. (376.03,111.2) .. controls (390.89,111.2) and (402.94,122.87) .. (402.94,137.25) .. controls (402.94,151.64) and (390.89,163.3) .. (376.03,163.3) .. controls (361.17,163.3) and (349.13,151.64) .. (349.13,137.25) -- cycle ;
\draw  [pattern={Lines[angle=0,distance={2.3pt},line width=0.8pt]}, pattern color = uuuuuu, fill opacity=0.4]  (284.79,137.25) .. controls (300.2,113.99) and (316.37,92.54) .. (315.65,58.14) .. controls (327.66,60.88) and (335.43,60.2) .. (345.31,57.46)  .. controls   (323.42,86.86) and (307.88,115.58) .. (284.79,137.25);
\draw [pattern={Lines[angle=90,distance={2.3pt},line width=0.8pt]}, pattern color = uuuuuu, fill opacity=0.4]   (376.03,137.25) .. controls (357.32,100.95) and (351.67,90.28) .. (345.31,57.46) .. controls (335.43,60.2) and (327.66,60.88) .. (315.65,58.14) .. controls (340.15,80.04) and (355.37,110.82) .. (376.03,137.25);
\draw [fill=uuuuuu] (376.03,137.25) circle (1.3pt);
\draw [fill=uuuuuu] (345.31,57.46) circle (1.3pt);
\draw [fill=uuuuuu] (315.65,58.14) circle (1.3pt);
\draw [fill=uuuuuu] (284.79,137.25) circle (1.3pt);
\draw   (490.91,65.98) .. controls (490.91,51.6) and (502.96,39.93) .. (517.82,39.93) .. controls (532.68,39.93) and (544.72,51.6) .. (544.72,65.98) .. controls (544.72,80.37) and (532.68,92.03) .. (517.82,92.03) .. controls (502.96,92.03) and (490.91,80.37) .. (490.91,65.98) -- cycle ;
\draw   (445.29,138.19) .. controls (445.29,123.8) and (457.34,112.14) .. (472.2,112.14) .. controls (487.06,112.14) and (499.1,123.8) .. (499.1,138.19) .. controls (499.1,152.57) and (487.06,164.24) .. (472.2,164.24) .. controls (457.34,164.24) and (445.29,152.57) .. (445.29,138.19) -- cycle ;
\draw   (536.53,138.19) .. controls (536.53,123.8) and (548.58,112.14) .. (563.44,112.14) .. controls (578.3,112.14) and (590.35,123.8) .. (590.35,138.19) .. controls (590.35,152.57) and (578.3,164.24) .. (563.44,164.24) .. controls (548.58,164.24) and (536.53,152.57) .. (536.53,138.19) -- cycle ;
\draw  [pattern={Lines[angle=0,distance={2.3pt},line width=0.8pt]}, pattern color = uuuuuu, fill opacity=0.4]  (500.92,64.18) .. controls (514.24,64.18) and (522.83,61.13) .. (532.72,58.39) .. controls (514.24,90.3) and  (552.29,116.42) .. (553.25,142.53) .. controls (541.83,116.42) and (515.19,90.3) .. (500.92,64.18) ;
\draw  [pattern={Lines[angle=90,distance={2.3pt},line width=0.8pt]}, pattern color = uuuuuu, fill opacity=0.4]  (500.92,64.18) .. controls (514.24,64.18) and (522.83,61.13) .. (532.72,58.39) .. controls (539.08,91.21) and (555.46,95.97) .. (574.17,132.27) .. controls (553.52,105.84) and (525.42,86.08) .. (500.92,64.18);
\draw [fill=uuuuuu] (500.92,64.18) circle (1.3pt);
\draw [fill=uuuuuu] (532.72,58.39) circle (1.3pt);
\draw [fill=uuuuuu] (574.17,132.27) circle (1.3pt);
\draw [fill=uuuuuu] (553.25,142.53) circle (1.3pt);
\draw   (111.34,227.35) .. controls (111.34,212.96) and (123.39,201.3) .. (138.25,201.3) .. controls (153.11,201.3) and (165.15,212.96) .. (165.15,227.35) .. controls (165.15,241.74) and (153.11,253.4) .. (138.25,253.4) .. controls (123.39,253.4) and (111.34,241.74) .. (111.34,227.35) -- cycle ;
\draw   (65.72,299.55) .. controls (65.72,285.16) and (77.77,273.5) .. (92.63,273.5) .. controls (107.49,273.5) and (119.53,285.16) .. (119.53,299.55) .. controls (119.53,313.94) and (107.49,325.6) .. (92.63,325.6) .. controls (77.77,325.6) and (65.72,313.94) .. (65.72,299.55) -- cycle ;
\draw   (156.96,299.55) .. controls (156.96,285.16) and (169.01,273.5) .. (183.87,273.5) .. controls (198.73,273.5) and (210.78,285.16) .. (210.78,299.55) .. controls (210.78,313.94) and (198.73,325.6) .. (183.87,325.6) .. controls (169.01,325.6) and (156.96,313.94) .. (156.96,299.55) -- cycle ;
\draw  [pattern={Lines[angle=0,distance={2.3pt},line width=0.8pt]}, pattern color = uuuuuu, fill opacity=0.4]  (121.35,225.55) .. controls (134.67,224.99) and (143.23,220.33) .. (149.89,214.73) .. controls (146.09,249.24) and (160.36,278.16) .. (168.92,307.63) .. controls (150.84,277.22) and (143.23,253.9) .. (121.35,225.55) ;
\draw  [pattern={Lines[angle=90,distance={2.3pt},line width=0.8pt]}, pattern color = uuuuuu, fill opacity=0.4]  (197.46,294.95) .. controls (168.92,290.28) and (145.85,247.45) .. (121.35,225.55) .. controls (143.23,253.9) and (150.84,277.22) .. (168.92,307.63) .. controls (177.48,300.54) and (187.57,297.68) .. (197.46,294.95) ;
\draw [fill=uuuuuu] (197.46,294.95) circle (1.3pt);
\draw [fill=uuuuuu] (121.35,225.55) circle (1.3pt);
\draw [fill=uuuuuu] (168.92,307.63) circle (1.3pt);
\draw [fill=uuuuuu] (149.89,214.73) circle (1.3pt);
\draw   (304.46,228.28) .. controls (304.46,213.89) and (316.5,202.23) .. (331.36,202.23) .. controls (346.22,202.23) and (358.27,213.89) .. (358.27,228.28) .. controls (358.27,242.67) and (346.22,254.33) .. (331.36,254.33) .. controls (316.5,254.33) and (304.46,242.67) .. (304.46,228.28) -- cycle ;
\draw   (258.83,300.48) .. controls (258.83,286.1) and (270.88,274.43) .. (285.74,274.43) .. controls (300.6,274.43) and (312.65,286.1) .. (312.65,300.48) .. controls (312.65,314.87) and (300.6,326.53) .. (285.74,326.53) .. controls (270.88,326.53) and (258.83,314.87) .. (258.83,300.48) -- cycle ;
\draw   (350.08,300.48) .. controls (350.08,286.1) and (362.12,274.43) .. (376.98,274.43) .. controls (391.84,274.43) and (403.89,286.1) .. (403.89,300.48) .. controls (403.89,314.87) and (391.84,326.53) .. (376.98,326.53) .. controls (362.12,326.53) and (350.08,314.87) .. (350.08,300.48) -- cycle ;
\draw  [pattern={Lines[angle=0,distance={2.3pt},line width=0.8pt]}, pattern color = uuuuuu, fill opacity=0.4]  (317.32,231.52) .. controls (325.88,231.52) and (327.79,227.79) .. (332.54,223.12) .. controls (343.96,253.9) and (357.28,267.9) .. (376.98,300.48) .. controls (354.42,278.16) and (337.3,251.48) .. (317.32,231.52);
\draw [pattern={Lines[angle=90,distance={2.3pt},line width=0.8pt]}, pattern color = uuuuuu, fill opacity=0.4]   (332.54,223.12) .. controls (339.2,224.06) and (343.96,220.33) .. (346.81,214.73) .. controls (353.17,247.55) and (360.13,257.64).. (376.98,300.48) .. controls (357.28,267.9) and (343.96,253.9) .. (332.54,223.12)  ;
\draw [fill=uuuuuu] (332.54,223.12) circle (1.3pt);
\draw [fill=uuuuuu] (346.81,214.73) circle (1.3pt);
\draw [fill=uuuuuu] (376.98,300.48) circle (1.3pt);
\draw [fill=uuuuuu] (317.32,231.52) circle (1.3pt);
\draw   (493.77,230.15) .. controls (493.77,215.76) and (505.81,204.1) .. (520.67,204.1) .. controls (535.53,204.1) and (547.58,215.76) .. (547.58,230.15) .. controls (547.58,244.53) and (535.53,256.2) .. (520.67,256.2) .. controls (505.81,256.2) and (493.77,244.53) .. (493.77,230.15) -- cycle ;
\draw   (448.14,302.35) .. controls (448.14,287.96) and (460.19,276.3) .. (475.05,276.3) .. controls (489.91,276.3) and (501.96,287.96) .. (501.96,302.35) .. controls (501.96,316.74) and (489.91,328.4) .. (475.05,328.4) .. controls (460.19,328.4) and (448.14,316.74) .. (448.14,302.35) -- cycle ;
\draw   (539.39,302.35) .. controls (539.39,287.96) and (551.43,276.3) .. (566.29,276.3) .. controls (581.15,276.3) and (593.2,287.96) .. (593.2,302.35) .. controls (593.2,316.74) and (581.15,328.4) .. (566.29,328.4) .. controls (551.43,328.4) and (539.39,316.74) .. (539.39,302.35) -- cycle ;
\draw  [pattern={Lines[angle=0,distance={2.3pt},line width=0.8pt]}, pattern color = uuuuuu, fill opacity=0.4]  (513.29,242.71) .. controls (515.19,231.52) and (516.14,228.72) .. (517.1,218.46) .. controls (524.71,222.19) and (530.41,224.06) .. (538.02,227.79) .. controls (530.01,233.02) and (524.71,236.18) .. (513.29,242.71);
\draw  [pattern={Lines[angle=90,distance={2.3pt},line width=0.8pt]}, pattern color = uuuuuu, fill opacity=0.4]  (538.02,227.79) .. controls (542.78,258.57)  and (547.58,266.04).. (566.29,302.35) .. controls (545.63,275.92) and (537.07,257.64) .. (513.29,242.71) .. controls (524.71,236.18) and (530.01,233.02) .. (538.02,227.79) ;
\draw [fill=uuuuuu] (538.02,227.79) circle (1.3pt);
\draw [fill=uuuuuu] (566.29,302.35) circle (1.3pt);
\draw [fill=uuuuuu] (513.29,242.71) circle (1.3pt);
\draw [fill=uuuuuu] (517.1,218.46) circle (1.3pt);
\end{tikzpicture}
\caption{Six types of bad $\mathbb{S}_{2}$.}
\label{fig:BS2}
\end{figure}

The following result is an analogue of Proposition~\ref{PROP:B-vs-075M}, but instead of comparing bad edges and missing triples, we compare bad and missing $\mathbb{S}_{2}$'s.

\begin{proposition}\label{PROP:BS2-vs-09MS2}
    Suppose that $\mathcal{H}$ is an $n$-vertex $\{K_{4}^{3-}, C_{5}^{3-}\}$-free $3$-graph with $\rho(\mathbb{S}_{2},\mathcal{H}) \ge 6/13 - 10^{-6}$ and $V_{1} \cup V_{2} \cup V_{3} = V(\mathcal{H})$ is a locally maximal partition satisfying $\mu_{\mathcal{H}}(V_1, V_2, V_3) \ge 0.198$. 
    Then with $B\mathbb{S}_2 = B\mathbb{S}_2^{\mathcal{H}}(V_{1}, V_{2}, V_{3})$ and $M\mathbb{S}_2 = M\mathbb{S}_2^{\mathcal{H}}(V_{1}, V_{2}, V_{3})$ as defined in~\eqref{equ:DEF-BS2} and~\eqref{equ:DEF-MS2}, we have 
    \begin{align*}
        |B\mathbb{S}_2| - \frac{9}{10}|M\mathbb{S}_2| 
        = o(n^4).
    \end{align*}
\end{proposition}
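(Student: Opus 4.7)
The plan is to prove this by the flag algebra method of Razborov~\cite{Raz07}, in direct parallel with Proposition~\ref{PROP:B-vs-075M}. I would work in the coloured flag algebra of $3$-coloured $\{K_{4}^{3-}, C_{5}^{3-}\}$-free $3$-graphs, where the colour of each vertex records its part among $V_1, V_2, V_3$. In this framework both $|B\mathbb{S}_2|$ and $|M\mathbb{S}_2|$ admit exact expressions as linear combinations of induced densities of $4$-vertex coloured flags: Figure~\ref{fig:BS2} enumerates the six $4$-vertex configurations contributing to bad $\mathbb{S}_2$'s, and Figure~\ref{fig:MS2} enumerates the two $4$-vertex configurations (necessarily with colour pattern $(2,1,1)$) contributing to missing $\mathbb{S}_2$'s. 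Reading off the coefficients yields an identity of the form
\[
  \frac{|B\mathbb{S}_2| - \tfrac{9}{10}\,|M\mathbb{S}_2|}{\binom{n}{4}}
  \;=\; \sum_{F \in \mathcal{F}_4^0}  c_F\, p(F,\mathcal{H}) + o(1),
\]
which is the standard input to the flag-algebra framework.

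I would then assemble a semi-definite program whose goal is to upper-bound the right-hand side subject to all the hypotheses of the proposition: (i) no induced $K_{4}^{3-}$ or $C_{5}^{3-}$, which kills every coloured flag containing such a subflag; (ii) the lower bound $\mu_{\mathcal{H}}(V_1,V_2,V_3)\ge 0.198$, translated into a density inequality on transversal triples; (iii) the lower bound $\rho(\mathbb{S}_2,\mathcal{H}) \ge 6/13 - 10^{-6}$, translated into a density inequality on induced $\mathbb{S}_2$'s; and (iv) local maximality of the partition, which, exactly as in~\cite{BLLP24}, is encoded as a family of $1$-type pointwise inequalities stating that for each coloured vertex no re-colouring strictly increases the number of transversal triples through it. The SDP is then solved using the SageMath flag-algebra package described in Section~\ref{SEC:Flag-Algebra}, and the returned sum-of-squares certificate is rationalised to yield a rigorous asymptotic bound of $0$ on the objective, which is precisely the claim.

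The main obstacle is simultaneously keeping the SDP tractable (i.e.\ the flag size $m$ reasonably small) while rich enough to certify the tight ratio~$9/10$. The local-maximality family (iv) is what makes this sharp ratio attainable at all: it is not a single density inequality but a family of vertex-link bounds parameterised by colour type, and the correct choice and number of $1$-types is the delicate engineering step. This step is strictly more demanding than in the edge version (Proposition~\ref{PROP:B-vs-075M}) because the relevant flags now live on $4$ rather than $3$ vertices and the number of coloured flag types grows accordingly; moreover the tighter ratio $9/10$ (versus the easier $3/4$ that sufficed in the edge setting of~\cite{BLLP24} under the weaker $\mu\ge 0.19$ hypothesis) leaves very little slack in the SDP, so the stronger max-cut lower bound $\mu\ge 0.198$ of Proposition~\ref{PROP:max-cut-ratio-lower-bound} and the stronger $\mathbb{S}_2$-density hypothesis must both be exploited to close the gap.
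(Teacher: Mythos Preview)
Your proposal is correct and follows essentially the same flag-algebra route as the paper. Note that the paper's SDP additionally feeds in, as linear constraints, the conclusion of Proposition~\ref{PROP:B-vs-075M} (that $|B|-\tfrac{3}{4}|M|\le 0$), the Tur\'an density bounds of Theorem~\ref{THM:BLLP24-C5Minus-Turan-Density} applied globally and within each part, and the derived part-size bounds $1/5\le |V_i|/n\le 1/2$; with these extra constraints the computation closes at $6$-vertex flags ($|\mathcal{F}_6^0|=16181$, roughly $19$ hours on a conventional PC).
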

 \begin{proof}
    Suppose to the contrary that this proposition fails. 
    Then there exists a constant $\delta > 0$ and an  increasing (i.e., the number of vertices is strictly increasing) infinite sequence  $\left(\mathcal{H}_{i}\right)_{i=1}^{\infty}$ of $\{K_{4}^{3-}, C_{5}^{3-}\}$-free $3$-graphs such that each $\mathcal{H}_{i}$ satisfies $\rho(\mathbb{S}_{2},\mathcal{H}_i) \ge 6/13 - 10^{-6}$ and admits a locally maximal partition $V_1^{(i)} \cup V_2^{(i)} \cup V_3^{(i)} = V(\mathcal{H}_{i})$ satisfying $\mu_{\mathcal{H}_i}(V_1^{(i)}, V_2^{(i)}, V_3^{(i)}) \ge 0.198$, but 
    \begin{align}\label{equ:BS2-09MS2-assume}
        |B\mathbb{S}_2^{(i)}| - \frac{9}{10}|M\mathbb{S}_2^{(i)}|
        \ge \delta n_i^{4},
    \end{align}
    where $n_i \coloneqq v(\mathcal{H}_{i})$, $B\mathbb{S}_2^{(i)} \coloneqq B\mathbb{S}_2^{\mathcal{H}}(V_1^{(i)}, V_2^{(i)}, V_3^{(i)})$, and $M\mathbb{S}_2^{(i)} \coloneqq M\mathbb{S}_2^{\mathcal{H}}(V_1^{(i)}, V_2^{(i)}, V_3^{(i)})$. 

    We would like to run flag algebra calculations on the limit $\phi$ of the sequence $\left( \mathcal{H}_{i} \right)_{i=1}^{\infty}$ in the theory of $\{K_{4}^{3-}, C_{5}^{3-}\}$-free $3$-graphs which are $3$-colored, that is, we have three unary relations $V_1, V_2, V_3$ with the only restriction that each vertex in a flag satisfies exactly one of them, equivalently, these unitary relations encode a $3$-partition of the vertex set.
     
    For each $i \in [3]$, let $(1,i)$ denote the $1$-vertex type where the color of the unique vertex is $i$.
    Consider the random homomorphism $\boldsymbol{\phi}^{(1,i)}$, which is the limit of taking a uniform random color-$i$ root. 
    Note that, by the max-cut ratio assumption $\mu_{\mathcal{H}_n}(V_1^{(n)}, V_2^{(n)}, V_3^{(n)}) \ge 0.198$, each part $V_i^{(n)}\subseteq V(\mathcal{H}_{n})$ is non-empty (in fact, it occupies a positive fraction of vertices), so $\boldsymbol{\phi}^{(1,i)}$ is well-defined.

    The local maximality assumptions translate, in the limit, to the statement that for each $i \in [3]$,
    \begin{align}\label{equ:flag-local-max-inequality}
        \boldsymbol{\phi}^{(1,i)}(K_{i+1,i+2}^{i})
        \ge \max\left\{ \boldsymbol{\phi}^{(1,i)}(K_{i,i+1}^{i}),~ \boldsymbol{\phi}^{(1,i)}(K_{i,i+2}^{i}) \right\}.
    \end{align}
    holds with probability $1$, where $K_{a,b}^{c}$ is the $(1,c)$-flag on three vertices that span an edge with the non-root vertices having colors $a$ and $b$ and the root vertex having color $c$. Here, the indices are taken modulo $3$.

    For every $n \ge 1$, since $\mu_{\mathcal{H}_n}(V_1^{(n)}, V_2^{(n)}, V_3^{(n)}) \ge 0.198$, it follows from Proposition~\ref{PROP:B-vs-075M} that 
    \begin{align*}
        B^{(n)} - \frac{3}{4}|M^{(n)}|
        \le 0, 
    \end{align*}
    where $B^{(n)} \coloneqq B_{\mathcal{H}_{n}}(V_1^{(n)}, V_2^{(n)}, V_3^{(n)})$ and $M^{(n)} \coloneqq M_{\mathcal{H}_{n}}(V_1^{(n)}, V_2^{(n)}, V_3^{(n)})$. 
    This translates, in the limit, to the statement that 
    \begin{align}\label{equ:flag-B-075M-inequality}
        \sum_{i \in [3]} \big( \phi(E_{i,i,i+1}) + \phi(E_{i,i,i+2}) \big) - \frac{3}{4}\phi(\xoverline{E}_{1,2,3})
        \le 0,
    \end{align}
    where $E_{a,b,c}$ denote the untyped flag on three vertices with colors $\{a,b,c\}$ containing exactly one edge, and $\xoverline{E}_{a,b,c}$ denote the untyped flag on three vertices with colors $\{a,b,c\}$ containing no edges. 


    Since each $\mathcal{H}_{n}$ is $\{K_{4}^{3-}, C_{5}^{3-}\}$-free, it follows from Theorem~\ref{THM:BLLP24-C5Minus-Turan-Density} that 
    \begin{align*}
        \rho(\mathcal{H}_{n}) 
        \le \frac{1}{4}
        \quad\text{and}\quad 
        \rho(\mathcal{H}_{n}[V_{i}^{(n)}])
        \le \frac{1}{4} \quad\text{for } i \in [3].
    \end{align*}
    This translates, in the limit, to the statement that 
    \begin{align}\label{equ:flag-Turan-density-inequality}
        \sum_{i,j,k}\phi(E_{i,j,k}) \le \frac{1}{4}
        \quad\text{and}\quad  
        3 \phi(E_{i,i,i}) - \phi(\xoverline{E}_{i,i,i})
        \le 0 \quad\text{for } i \in [3].
    \end{align} 
    %
    %
    The max-cut ratio $\mu_{\mathcal{H}_n}(V_1^{(n)}, V_2^{(n)}, V_3^{(n)}) \ge 0.198$ assumption translates, in the limit, to the statement that 
    \begin{align}\label{equ:flag-max-cut-inequality}
        \phi(E_{1,2,3}) 
        \ge 0.198.
    \end{align}
    Straightforward calculations show that the max-cut ratio $\mu_{\mathcal{H}_n}(V_1^{(n)}, V_2^{(n)}, V_3^{(n)}) \ge 0.198$ assumption implies that $|V_i^{(n)}| \in [v(\mathcal{H}_{n})/5,~v(\mathcal{H}_{n})/2]$ for $i \in [3]$. This translates, in the limit, to the statement that 
    \begin{align}\label{equ:flag-part-szie-inequality}
        \frac{1}{5}
        \le \phi(V_i) 
        \le \frac{1}{2} \quad\text{for } i \in [3], 
    \end{align}
    where $V_i$ denote the untyped color-$i$ flag with only one vertex. 
    
    The $\mathbb{S}_{2}$-density assumption $\rho(\mathbb{S}_{2},\mathcal{H}_n) \ge 6/13 - 10^{-6}$ assumption translates, in the limit, to the statement that 
    \begin{align}\label{equ:flag-S2-density-lower-inequality}
        \phi(\mathbb{S}) \ge \frac{6}{13} - 10^{-6},
    \end{align}
    where, for simplicity, $\mathbb{S}$ denotes the sum of all possible colored untyped $4$-vertex flags with exactly two edges.

    Finally,~\eqref{equ:BS2-09MS2-assume} translates, in the limit, to the statement that 
    \begin{align}\label{equ:flag-BS2-MS2-inequality}
        \phi(\mathbb{S}_{b}) - \frac{9}{10}\phi(\mathbb{S}_{m}) 
        \ge 4! \delta = 24 \delta,  
    \end{align}
    where, for simplicity, $\mathbb{S}_{b}$ denotes the sum of all possible colored untyped $4$-vertex flags with exactly two edges containing $E_{i,i,i+1}$ or $E_{i,i,i+2}$ for some $i \in [3]$ (see Figure~\ref{fig:BS2}), and $\mathbb{S}_{m}$ denotes the sum of all possible $3$-colored untyped $4$-vertex flags with at most one edge, which, if present, must be $E_{1,2,3}$ (see Figure~\ref{fig:MS2}).

    We can now run the usual flag algebra calculations where each of the inequalities in~\eqref{equ:flag-local-max-inequality} to~\eqref{equ:flag-S2-density-lower-inequality} can be multiplied by an unknown non-negative combination of respectively $0$-flags and $(1,i)$-flags (and then averaged out in the latter case). 
    The final inequality should prove that the left-hand side of~\eqref{equ:flag-BS2-MS2-inequality} is non-positive.

    Our proof uses $6$-vertex flags (where the number of linear constraints is $|\mathcal{F}_6^0|=16181$). 
    Running it on a conventional PC takes around $19$ hours.
    The results returned by the computer for the calculations for $\phi(\mathbb{S}_{b}) - \frac{9}{10}\phi(\mathbb{S}_{m})$ is indeed $0$, which contradicts~\eqref{equ:flag-BS2-MS2-inequality}. 
\end{proof}                                                                          
\section{Proof of Theorem~\ref{THM:C5Minus-S2}~\ref{THM:C5Minus-S2-a}}\label{SEC:proof-S2-density}    
We prove Theorem~\ref{THM:C5Minus-S2}~\ref{THM:C5Minus-S2-a} in this section.  Here (and in the subsequent sections), we will be rather loose with the constants in the lower-order terms.

Let us begin with the following crucial lemma.

\begin{lemma}\label{LEMMA:recursive-upper-bound}
    There exists a non-increasing function $N_{\ref{LEMMA:recursive-upper-bound}} \colon (0,1) \to \mathbb{N}$ such that  the following holds for every $\varepsilon > 0$ and for every $n \ge N_{\ref{LEMMA:recursive-upper-bound}}(\varepsilon)$.
    Suppose that $\mathcal{H}$ is a $\{K_{4}^{3-}, C_{5}^{3-}\}$-free $3$-graph on $n \ge N_{\ref{LEMMA:recursive-upper-bound}}$ vertices with $\rho(\mathbb{S}_{2}, \mathcal{H}) \ge 6/13 - 10^{-6}$. 
    Then there exists a partition $V_1 \cup V_2 \cup V_3 = V(\mathcal{H})$ with $|V_i| \in [n/5,~n/2]$ for $i \in [3]$ such that 
    \begin{align}\label{equ:LEMMA:recursive-upper-bound}
        \mathrm{N}(\mathbb{S}_{2}, \mathcal{H})
        \le \frac{|V_1||V_2||V_3| n}{2} + \sum_{i\in [3]}\mathrm{N}(\mathbb{S}_{2}, \mathcal{H}[V_i]) + \varepsilon n^4 - \max\left\{\frac{|B\mathbb{S}_{2}|}{9},~\frac{|M\mathbb{S}_{2}|}{10}\right\}, 
    \end{align}
    where $B\mathbb{S}_2 = B\mathbb{S}_2^{\mathcal{H}}(V_{1}, V_{2}, V_{3})$ and $M\mathbb{S}_2 = M\mathbb{S}_2^{\mathcal{H}}(V_{1}, V_{2}, V_{3})$ were defined in~\eqref{equ:DEF-BS2} and~\eqref{equ:DEF-MS2}.
\end{lemma}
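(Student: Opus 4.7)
The plan is to extract a good $3$-partition of $V(\mathcal{H})$, classify every copy of $\mathbb{S}_2$ in $\mathcal{H}$ by how its vertices and edges sit relative to the parts, and then upgrade the crude difference $|B\mathbb{S}_2|-|M\mathbb{S}_2|$ into the advertised $\max$-term by invoking Proposition~\ref{PROP:BS2-vs-09MS2}.

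First, I would apply Proposition~\ref{PROP:max-cut-ratio-lower-bound} to obtain a partition $V_1\cup V_2\cup V_3=V(\mathcal{H})$ with $\mu_{\mathcal{H}}(V_1,V_2,V_3)\ge \alpha_{\ref{pr2}}>0.198$, and then greedily move single vertices between parts (each move strictly increasing the transversal-edge count) until the partition is locally maximal; since the process never decreases transversal edges, the bound $\mu_{\mathcal{H}}(V_1,V_2,V_3)\ge 0.198$ survives. A short AM--GM argument then forces $|V_i|\in[n/5,n/2]$ for every $i\in[3]$: if some $|V_i|\le n/5$ or $|V_i|\ge n/2$, then the maximum of $|V_1||V_2||V_3|$ under the corresponding constraints drops below $0.033\,n^3\le |\mathcal{H}\cap\mathcal{K}[V_1,V_2,V_3]|$, a contradiction.

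Next, I would classify the copies of $\mathbb{S}_2$ in $\mathcal{H}$ into three types: those entirely contained in some $\mathcal{H}[V_i]$, those containing at least one bad edge (which is precisely $B\mathbb{S}_2$), and those whose two edges both lie in $\mathcal{H}\cap\mathcal{K}[V_1,V_2,V_3]$. A short case analysis on the vertex distribution of the four vertices of an $\mathbb{S}_2$ (any edge with two vertices in one part and its third vertex outside that part is automatically bad, while a transversal edge forces its three vertices into distinct parts) establishes that the three classes are pairwise disjoint and exhaustive. Counting pairs of transversal edges sharing two vertices yields
\[
\mathrm{N}(\mathbb{S}_2,\mathcal{K}[V_1,V_2,V_3])
= \binom{|V_1|}{2}|V_2||V_3|+\binom{|V_2|}{2}|V_1||V_3|+\binom{|V_3|}{2}|V_1||V_2|
\le \frac{|V_1||V_2||V_3|\,n}{2},
\]
and by the definition of $M\mathbb{S}_2$ the third-type contribution equals $\mathrm{N}(\mathbb{S}_2,\mathcal{K}[V_1,V_2,V_3])-|M\mathbb{S}_2|$. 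Summing the three contributions gives
\[
\mathrm{N}(\mathbb{S}_2,\mathcal{H})
\le \sum_{i\in[3]}\mathrm{N}(\mathbb{S}_2,\mathcal{H}[V_i])+\frac{|V_1||V_2||V_3|\,n}{2}+|B\mathbb{S}_2|-|M\mathbb{S}_2|.
\]

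Finally, I would feed Proposition~\ref{PROP:BS2-vs-09MS2}, i.e.\ $|B\mathbb{S}_2|\le \tfrac{9}{10}|M\mathbb{S}_2|+o(n^4)$, into the residual $|B\mathbb{S}_2|-|M\mathbb{S}_2|$: rewriting it as $(|B\mathbb{S}_2|-\tfrac{9}{10}|M\mathbb{S}_2|)-\tfrac{1}{10}|M\mathbb{S}_2|$ yields $\le -\tfrac{1}{10}|M\mathbb{S}_2|+o(n^4)$, while combining with $|M\mathbb{S}_2|\ge \tfrac{10}{9}|B\mathbb{S}_2|-o(n^4)$ yields $\le -\tfrac{1}{9}|B\mathbb{S}_2|+o(n^4)$; taking the tighter of the two produces the required $-\max\{|B\mathbb{S}_2|/9,|M\mathbb{S}_2|/10\}$ term. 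Choosing $N_{\ref{LEMMA:recursive-upper-bound}}(\varepsilon)$ large enough (and non-increasing in $\varepsilon$) to absorb the $o(n^4)$ slack into $\varepsilon n^4$ closes the argument. The only non-mechanical ingredient is the case analysis verifying disjointness and exhaustiveness of the three classes of $\mathbb{S}_2$; everything else is bookkeeping together with direct invocation of the propositions of Section~\ref{SEC:Flag-Algebra}.
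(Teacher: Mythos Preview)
Your proposal is correct and follows essentially the same route as the paper's proof: obtain a locally maximal partition with large transversal density, decompose $\mathrm{N}(\mathbb{S}_2,\mathcal{H})$ into the inside-a-part, bad, and transversal contributions, bound the transversal contribution by $\tfrac{|V_1||V_2||V_3|n}{2}-|M\mathbb{S}_2|$, and then use Proposition~\ref{PROP:BS2-vs-09MS2} to convert $|B\mathbb{S}_2|-|M\mathbb{S}_2|$ into $\varepsilon n^4-\max\{|B\mathbb{S}_2|/9,|M\mathbb{S}_2|/10\}$. The only cosmetic difference is that the paper takes a globally \emph{maximum} partition (automatically locally maximal), whereas you start from the partition guaranteed by Proposition~\ref{PROP:max-cut-ratio-lower-bound} and locally improve; both yield what is needed.
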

\begin{proof}
    It is enough to show that, for each sufficiently large integer $m$, say $m\ge m_0$, there is $n_0(m)$ such that the conclusion holds when $\varepsilon =1/m$ and $n \ge n_0(m)$, as then we can take, for example, $N_{\ref{LEMMA:recursive-upper-bound}}(x) \coloneqq \max\{ n_0(m) \colon m_0\le m\le \lceil{1/x} \rceil\}$ for $x\in (0,1)$.
    
    Fix a sufficiently large $m$, let $\varepsilon \coloneqq 1/m$ and then let $n$ be sufficiently large. 
    Let $\mathcal{H}$ be an $n$-vertex $\{K_{4}^{3-}, C_{5}^{3-}\}$-free $3$-graph with 
    \begin{align}\label{equ:S2-density-lower-bound-assume}
        \rho(\mathbb{S}_{2}, \mathcal{H}) \ge \frac{6}{13} - 10^{-6}. 
    \end{align}
    Let $V \coloneqq V(\mathcal{H})$. 
    Let $V_1 \cup V_2 \cup V_3 = V$ be a partition such that $|\mathcal{H} \cap \mathcal{K}[V_1, V_2, V_3]|$ is maximized (in particular, the partition $V_1 \cup V_2 \cup V_3 = V$ is locally maximal). 
    Since $\rho(\mathbb{S}_{2}, \mathcal{H}) \ge 6/13 - 10^{-6}$, it follows from Proposition~\ref{PROP:max-cut-ratio-lower-bound} that 
    \begin{align}\label{equ:max-cut-ratio-lower-bound}
        \mu_{\mathcal{H}}(V_1, V_2, V_3)
        = \frac{6}{n^3} \cdot |\mathcal{H} \cap \mathcal{K}[V_1, V_2, V_3]|
        \ge \alpha_{\ref{pr2}}.
    \end{align}
    Let $x_i \coloneqq |V_i|/n$ for $i \in [3]$. 
    It follows from~\eqref{equ:max-cut-ratio-lower-bound} that $x_1 x_2 x_3 \ge \alpha_{\ref{pr2}}/6$. 
    Straightforward calculations show that 
    \begin{align}\label{equ:size-Vi-lower-upper-bounds}
        \frac{1}{5} < x_i < \frac{1}{2} \quad\text{for every $i \in [3]$}. 
    \end{align}
    This shows that $|V_i| \in [n/5,~n/2]$ for $i \in [3]$. 
    
    Since $\mathcal{H}$ satisfies~\eqref{equ:S2-density-lower-bound-assume} and the partition $V = V_1 \cup V_2 \cup V_3$ satisfies~\eqref{equ:max-cut-ratio-lower-bound}, it follows from Proposition~\ref{PROP:BS2-vs-09MS2} that 
    \begin{align*}
        |B\mathbb{S}_2| - \frac{9}{10}|M\mathbb{S}_2| 
        \le \frac{\varepsilon n^4}{2}. 
    \end{align*}
    It follows that 
    \begin{align*}
        |B\mathbb{S}_2| - |M\mathbb{S}_2|
        & = |B\mathbb{S}_2| - \frac{9}{10}|M\mathbb{S}_2| - \frac{|M\mathbb{S}_2|}{10} 
        \le \frac{\varepsilon n^4}{2} - \frac{|M\mathbb{S}_2|}{10} \quad\text{and} \\[0.5em]
        |B\mathbb{S}_2| - |M\mathbb{S}_2|
        & = \frac{10}{9}\left( |B\mathbb{S}_2| - \frac{9}{10}|M\mathbb{S}_2| \right) - \frac{|B\mathbb{S}_2|}{9} 
        \le \varepsilon n^4 - \frac{|B\mathbb{S}_2|}{9},
    \end{align*}
    which implies that 
    \begin{align}\label{equ:BS2-vs-09MS2}
        |B\mathbb{S}_2| - |M\mathbb{S}_2|
        \le \varepsilon n^4 - \max\left\{|M\mathbb{S}_2|/10,~|B\mathbb{S}_2|/9\right\}. 
    \end{align}
    Also, note that 
    \begin{align*}
        \mathrm{N}(\mathbb{S}_{2}, \mathcal{K}[V_1, V_2, V_3])
        & = |V_1||V_2|\binom{|V_3|}{2} + |V_1|\binom{|V_2|}{2}|V_3| + \binom{|V_1|}{2}|V_2||V_3| \\
        & \le \frac{1}{2}\left(|V_1||V_2||V_3|^2 + |V_1||V_2|^2|V_3| + |V_1|^2|V_2||V_3|\right) \\
        & = \frac{1}{2} \left(|V_1||V_2||V_3| \left(|V_1|+ |V_2| + |V_3| \right)\right)
        = \frac{|V_1||V_2||V_3|n}{2}. 
    \end{align*}
    Combining it with~\eqref{equ:BS2-vs-09MS2}, we obtain 
    \begin{align*}
        \mathrm{N}(\mathbb{S}_{2}, \mathcal{H})
        & = \mathrm{N}(\mathbb{S}_{2}, \mathcal{H} \cap \mathcal{K}[V_1, V_2, V_3]) + |B\mathbb{S}_2| + \sum_{i\in[3]}\mathrm{N}(\mathbb{S}_{2}, \mathcal{H}[V_i]) \\
        & = \mathrm{N}(\mathbb{S}_{2}, \mathcal{K}[V_1, V_2, V_3]) - |M\mathbb{S}_2| + |B\mathbb{S}_2| + \sum_{i\in[3]}\mathrm{N}(\mathbb{S}_{2}, \mathcal{H}[V_i]) \\
        & \le \frac{|V_1||V_2||V_3|n}{2} + \sum_{i\in[3]}\mathrm{N}(\mathbb{S}_{2}, \mathcal{H}[V_i]) + \varepsilon n^4 - \max\left\{\frac{|B\mathbb{S}_{2}|}{9},~\frac{|M\mathbb{S}_{2}|}{10}\right\}, 
    \end{align*}
    which proves Lemma~\ref{LEMMA:recursive-upper-bound}.
\end{proof}

We are now ready to present the proof of Theorem~\ref{THM:C5Minus-S2}~\ref{THM:C5Minus-S2-a}. 
\begin{proof}[Proof of Theorem~\ref{THM:C5Minus-S2}~\ref{THM:C5Minus-S2-a}]
    Let $\alpha\coloneqq
    \pi(\mathbb{S}_{2}, \{K_{4}^{3-}, C_5^{3-}\})$. Fix a small constant $\varepsilon > 0$ and let $n$ be sufficiently large. Let $\mathcal{H}$ be an $n$-vertex $\{K_{4}^{3-}, C_5^{3-}\}$-free $3$-graph with $\mathbb{N}(\mathbb{S}_{2}, \mathcal{H}) = \mathrm{ex}(n,\mathbb{S}_{2}, \{K_{4}^{3-}, C_5^{3-}\})$, that is, the maximum number of $\mathbb{S}_{2}$. 
    Let $V \coloneqq V(\mathcal{H})$. 

    It follows from the recursive $3$-partition construction $T_{\mathrm{rec}}$ that $\alpha \ge 6/13$. 
    Therefore, by taking $n$ sufficiently large, we can ensure that
    \begin{align}\label{equ:S2-number-lower-bound-extremal}
        \mathbb{N}(\mathbb{S}_{2}, \mathcal{H}) 
        = \mathrm{ex}(n,\mathbb{S}_{2}, \{K_{4}^{3-}, C_5^{3-}\})
        \ge (\alpha - \varepsilon) \frac{n^4}{24}
        \ge \left(\frac{6}{13} - 10^{-6}\right) \frac{n^4}{24}. 
    \end{align}
    Then it follows from Lemma~\ref{LEMMA:recursive-upper-bound} that there exists a partition $V_1 \cup V_2 \cup V_3 = V$ with $|V_i| \in [n/5,~n/2]$ for $i \in [3]$ such that~\eqref{equ:LEMMA:recursive-upper-bound} holds. 
    
    Let $x_i \coloneqq |V_i|/n$ for $i \in [3]$, noting that $x_i \in [1/5,~1/2]$. 
    For each $i \in [3]$, since $|V_i| \ge n/5$, we can choose $n$ sufficiently large so that 
    \begin{align*}
        \mathrm{N}(\mathbb{S}_{2}, \mathcal{H}[V_i])
        \le \left(\alpha + \varepsilon \right) \frac{|V_i|^4}{24}
        = \frac{(\alpha + \varepsilon) x_i^4 n^4}{24}
        \le \frac{\alpha x_i^4 n^4}{24} + \frac{\varepsilon n^4}{24}.
    \end{align*}
    Combining it with~\eqref{equ:S2-number-lower-bound-extremal} and~\eqref{equ:LEMMA:recursive-upper-bound}, we obtain 
    \begin{align*}
        (\alpha - \varepsilon) \frac{n^4}{24}
        & \le \frac{x_1 x_2 x_3 n^4}{2} + \sum_{i\in [3]} \frac{\alpha x_i^4 n^4 + \varepsilon n^4}{24} + \varepsilon n^4 \\
        & \le \left(\frac{x_1 x_2 x_3}{2} + \frac{\alpha (x_1^4 + x_2^4 + x_3^4)}{24}\right) n^4 + \frac{9 \varepsilon n^4}{8}, 
    \end{align*}
    Simplifying this inequality, we obtain 
    \begin{align*}
        \alpha 
        \le 12 x_1 x_2 x_3 + \alpha (x_1^4 + x_2^4 + x_3^4) + 28 \varepsilon,
    \end{align*}
    which, by Fact~\ref{FACT:inequalities}, implies that 
    \begin{align*}
        \alpha 
        & \le \frac{12 x_1 x_2 x_3}{1 - (x_1^4 + x_2^4 + x_3^4)} + \frac{28 \varepsilon}{1- (x_1^4 + x_2^4 + x_3^4)} \\
        & \le \frac{12}{26} + \frac{28 \varepsilon}{1- \left((1/2)^4 + (1/2)^4 + (1/2)^4 \right)}
        \le \frac{6}{13} + 35 \varepsilon.
    \end{align*}
    Letting $\varepsilon \to 0$, we obtain $\alpha \le 6/13$, which proves that $\pi(\mathbb{S}_{2}, \{K_{4}^{3-}, C_5^{3-}\}) = 6/13$. 
\end{proof}

\section{Proof of Theorem~\ref{THM:C5Minus-S2}~\ref{THM:C5Minus-S2-b}}\label{SEC:proof-S2-stability}    
In this section, we prove Theorem~\ref{THM:C5Minus-S2}~\ref{THM:C5Minus-S2-b}.
We begin by establishing the following weaker form of stability. 

\begin{lemma}\label{LEMMA:weak-stability}
    There exist an absolute constant $\varepsilon_{\ref{LEMMA:weak-stability}} > 0$ and a non-increasing function $N_{\ref{LEMMA:weak-stability}} \colon (0,\varepsilon_{\ref{LEMMA:weak-stability}}) \to \mathbb{N}$ such that  the following holds for every $\varepsilon \in (0, \varepsilon_{\ref{LEMMA:weak-stability}})$ and for every $n \ge N_{\ref{LEMMA:weak-stability}}(\varepsilon)$.
    Suppose that $\mathcal{H}$ is an $n$-vertex $\{K_{4}^{3-}, C_{5}^{3-}\}$-free $3$-graph with $\mathrm{N}(\mathbb{S}_{2},\mathcal{H}) \ge \left(\pi(\mathbb{S}_{2}, \{K_{4}^{3-}, C_{5}^{3-}\}) - \varepsilon \right)n^4/24$. 
    Then there exists a partition $V_{1} \cup V_{2} \cup V_{3} =V(\mathcal{H})$ such that
      \begin{enumerate}[label=(\roman*)]
          \item\label{LEMMA:weak-stability-1} $\left|| V_{i}|-n/3 \right| \le 6\varepsilon^{1/2} n$ for every $i \in[3]$, 
          \item\label{LEMMA:weak-stability-2}  $\max\{|B|,~|M|\} \le 25 \varepsilon n^{3}$, where $B=B_{\mathcal{H}}(V_{1}, V_{2}, V_{3})$ and $M=M_{\mathcal{H}}(V_{1}, V_{2}, V_{3})$, and 
          \item\label{LEMMA:weak-stability-3} $\mathrm{N}(\mathbb{S}_{2},\mathcal{H}[V_i]) \geq \left(\pi(\mathbb{S}_{2}, \{K_{4}^{3-}, C_{5}^{3-}\}) - 2400 \varepsilon \right)\left|V_{i}\right|^{4}/24$ for every $i \in[3]$.
      \end{enumerate}
\end{lemma}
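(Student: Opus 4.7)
The strategy is to make the argument proving Theorem~\ref{THM:C5Minus-S2}~\ref{THM:C5Minus-S2-a} quantitative by carrying every source of slack along. Applying Lemma~\ref{LEMMA:recursive-upper-bound} with parameter of order $\varepsilon$, we obtain a (locally maximal) partition $V_{1}\cup V_{2}\cup V_{3}=V(\mathcal{H})$ with $|V_{i}|\in[n/5,n/2]$ satisfying~\eqref{equ:LEMMA:recursive-upper-bound}. Writing $x_{i}\coloneqq|V_{i}|/n$, using Theorem~\ref{THM:C5Minus-S2}~\ref{THM:C5Minus-S2-a} (already proved) to bound $\mathrm{N}(\mathbb{S}_{2},\mathcal{H}[V_{i}])\le(\tfrac{6}{13}+O(\varepsilon))|V_{i}|^{4}/24$, combining with the hypothesis $\mathrm{N}(\mathbb{S}_{2},\mathcal{H})\ge(\tfrac{6}{13}-\varepsilon)n^{4}/24$, and applying Fact~\ref{FACT:inequalities}~\ref{FACT:inequalities-3} to bound $12x_{1}x_{2}x_{3}+\tfrac{6}{13}(x_{1}^{4}+x_{2}^{4}+x_{3}^{4})\le\tfrac{6}{13}-\tfrac{4}{5}\sum_{i}(x_{i}-1/3)^{2}$, yields the master inequality
\begin{align*}
    \tfrac{n^{4}}{30}\sum_{i\in[3]}\Bigl(x_{i}-\tfrac{1}{3}\Bigr)^{\!2}+\max\!\Bigl\{\tfrac{|B\mathbb{S}_{2}|}{9},\tfrac{|M\mathbb{S}_{2}|}{10}\Bigr\}\le O(\varepsilon)\,n^{4}.
\end{align*}

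From the quadratic term I read off $\sum_{i}(x_{i}-1/3)^{2}=O(\varepsilon)$, hence $|V_{i}-n/3|\le 6\sqrt{\varepsilon}\,n$ once the implicit constants are tracked; this is item~\ref{LEMMA:weak-stability-1}. From the $\max$-term I obtain $|M\mathbb{S}_{2}|=O(\varepsilon)n^{4}$. To convert this into a bound on $|M|$, I use the double counting: every $e\in M$ is contained in exactly $n-3$ copies of $\mathbb{S}_{2}$ inside $\mathcal{K}[V_{1},V_{2},V_{3}]$ (choose any pair of $e$ and any fourth vertex in the remaining part), each such copy lies in $M\mathbb{S}_{2}$ because it contains the missing triple $e$, and each element of $M\mathbb{S}_{2}$ is counted at most twice (once per missing edge among its two transversal triples), giving $|M|\le 2|M\mathbb{S}_{2}|/(n-3)=O(\varepsilon)n^{3}$. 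Since Lemma~\ref{LEMMA:recursive-upper-bound}'s proof selects a partition that is locally maximal, Proposition~\ref{PROP:B-vs-075M} applies and gives $|B|\le\tfrac{3}{4}|M|=O(\varepsilon)n^{3}$; careful bookkeeping of constants certifies the threshold $25\varepsilon n^{3}$ of item~\ref{LEMMA:weak-stability-2}.

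For item~\ref{LEMMA:weak-stability-3}, I rearrange~\eqref{equ:LEMMA:recursive-upper-bound} for $\sum_{i}\mathrm{N}(\mathbb{S}_{2},\mathcal{H}[V_{i}])$ after dropping the non-negative $\max$-term; Fact~\ref{FACT:inequalities}~\ref{FACT:inequalities-3} now gives $\sum_{i}\mathrm{N}(\mathbb{S}_{2},\mathcal{H}[V_{i}])\ge\sum_{i}\tfrac{6}{13}|V_{i}|^{4}/24-O(\varepsilon)n^{4}$. Subtracting the already-established upper bounds $\mathrm{N}(\mathbb{S}_{2},\mathcal{H}[V_{j}])\le(\tfrac{6}{13}+O(\varepsilon))|V_{j}|^{4}/24$ for the two indices $j\ne i$ isolates each summand: $\mathrm{N}(\mathbb{S}_{2},\mathcal{H}[V_{i}])\ge\tfrac{6}{13}|V_{i}|^{4}/24-O(\varepsilon)n^{4}$. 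Finally, the balance $|V_{i}|\ge n/3-6\sqrt{\varepsilon}\,n$ from item~\ref{LEMMA:weak-stability-1} converts the absolute loss $O(\varepsilon)n^{4}$ into a multiplicative error of order $\varepsilon$ on $|V_{i}|^{4}/24$; tracking the constant through the factor $(1/3)^{-4}\cdot 24$ produces the bound $(\tfrac{6}{13}-2400\varepsilon)|V_{i}|^{4}/24$ demanded.

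The main obstacle is purely numerical: one must choose $\varepsilon_{\ref{LEMMA:weak-stability}}$ small enough so that (i)~the hypothesis $\rho(\mathbb{S}_{2},\mathcal{H})\ge6/13-10^{-6}$ needed by Lemma~\ref{LEMMA:recursive-upper-bound} is met; (ii)~the deduced inequality $x_{i}\ge1/5$ falls inside the domain of Fact~\ref{FACT:inequalities}~\ref{FACT:inequalities-3}; and (iii)~the accumulated constants coming from the recursive bound, the factor $2/(n-3)$ in the $M\mathbb{S}_{2}\to M$ conversion, and the additive-to-multiplicative conversion with denominator $\approx(1/3)^{4}$ all stay inside the promised thresholds $25$ and $2400$. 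No new combinatorial input is needed beyond the tools already established.
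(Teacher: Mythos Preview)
Your proposal is correct and follows essentially the same route as the paper's proof: apply Lemma~\ref{LEMMA:recursive-upper-bound}, combine with the hypothesis and Fact~\ref{FACT:inequalities}~\ref{FACT:inequalities-3} to obtain a master inequality bounding both $\sum_i(x_i-1/3)^2$ and $\max\{|B\mathbb{S}_2|/9,|M\mathbb{S}_2|/10\}$ by $O(\varepsilon)$, then convert $|M\mathbb{S}_2|$ to $|M|$ by the same $(n-3)$-to-$2$ double count and invoke Proposition~\ref{PROP:B-vs-075M} for $|B|$. The only cosmetic difference is in item~\ref{LEMMA:weak-stability-3}: the paper introduces individual slack variables $y_i\ge 0$ with $\mathrm{N}(\mathbb{S}_2,\mathcal{H}[V_i])=\alpha|V_i|^4/24+\varepsilon n^4/24-y_in^4$ and reads off $y_i\le 7\varepsilon/6$ directly from the master inequality, whereas you first bound the sum $\sum_i\mathrm{N}(\mathbb{S}_2,\mathcal{H}[V_i])$ from below and then subtract the upper bounds for the other two parts; the two bookkeeping schemes are equivalent and yield the same constants.
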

\begin{proof}
    Let $\varepsilon_{\ref{LEMMA:weak-stability}} > 0$ be sufficiently small. 
    Fix $\varepsilon \in (0,\varepsilon_{\ref{LEMMA:weak-stability}})$ and let $N_{\ref{LEMMA:weak-stability}}(\varepsilon)$ be sufficiently large. The monotonicity of the function $N_{\ref{LEMMA:weak-stability}} \colon (0,\varepsilon_{\ref{LEMMA:weak-stability}}) \to \mathbb{N}$ can be ensured using the same trick as in the proof of Lemma~\ref{LEMMA:recursive-upper-bound}. 
    
    Let $\alpha \coloneqq \pi(\mathbb{S}_{2}, \{K_{4}^{3-}, C_{5}^{3-}\}) = 6/13$. 
    Let $\mathcal{H}$ be a $\{K_{4}^{3-}, C_{5}^{3-}\}$-free $3$-graph on $n \ge N_{\ref{LEMMA:weak-stability}}(\varepsilon)$ vertices with 
    \begin{align}\label{equ:S2-lower-bound-assume-b}
        \mathrm{N}(\mathbb{S}_{2},\mathcal{H}) 
        \ge (\alpha - \varepsilon){n^4}/{24}. 
    \end{align}
    By Lemma~\ref{LEMMA:recursive-upper-bound}, there exists a partition  $V_1 \cup V_2 \cup V_3 = V \coloneqq V(\mathcal{H})$ with $|V_i| \in [n/5,~n/2]$ for every $i \in [3]$ such that~\eqref{equ:LEMMA:recursive-upper-bound} holds. 

    Let $x_i \coloneqq |V_i|/n$ for $i \in [3]$. 
    For each $i \in [3]$, since $|V_i| \ge n/5$, we can choose $n$ sufficiently large so that $\mathrm{N}(\mathbb{S}_{2}, \mathcal{H}[V_i]) \le \left(\alpha + \varepsilon \right) |V_i|^4/24 \le \alpha |V_i|^4/24 + \varepsilon n^4/24$.
    For each $i \in [3]$, let $y_i \ge 0$ denote the real number such that 
    \begin{align*}
        \mathrm{N}(\mathbb{S}_{2}, \mathcal{H}[V_i])
        = \frac{\alpha |V_i|^4}{24} + \frac{\varepsilon n^4}{24} - y_i n^4
        = \left(\frac{\alpha x_i^4}{24} + \frac{\varepsilon}{24} - y_i \right) n^4.
    \end{align*}
    Let $z \ge 0$ denote the real number such that 
    \begin{align*}
        z n^4
        = \max\left\{{|B\mathbb{S}_{2}|}/{9},~{|M\mathbb{S}_{2}|}/{10}\right\}. 
    \end{align*}
    Combining~\eqref{equ:S2-lower-bound-assume-b} and~\eqref{equ:LEMMA:recursive-upper-bound}, we obtain 
    \begin{align*}
        (\alpha - \varepsilon) \frac{n^4}{24}
        & \le \frac{x_1 x_2 x_3 n^4}{2} + \sum_{i\in [3]} \left(\frac{\alpha x_i^4}{24} + \frac{\varepsilon}{24} - y_i \right) n^4 + \varepsilon n^4 - z n^4\\
        & \le \left(\frac{x_1 x_2 x_3}{2} + \frac{\alpha (x_1^4 + x_2^4 + x_3^4)}{24}\right) n^4 + \frac{9 \varepsilon n^4}{8} - \left(y_1+y_2+y_3 + z\right) n^4. 
    \end{align*}
    Simplifying this inequality and applying Fact~\ref{FACT:inequalities}~\ref{FACT:inequalities-3}, we obtain 
    \begin{align*}
        \frac{\alpha}{24}
        & \le \frac{x_1 x_2 x_3}{2} + \frac{\alpha (x_1^4 + x_2^4 + x_3^4)}{24} + \frac{7 \varepsilon}{6} - \left(y_1+y_2+y_3 + z\right) \\
        & \le \frac{\alpha}{24} - \frac{1}{30}\sum_{i\in [3]}\left(x_i - \frac{1}{3}\right)^2 + \frac{7 \varepsilon}{6} - \left(y_1+y_2+y_3 + z\right).
    \end{align*}
    It follows that 
    \begin{align*}
        \frac{1}{30} \sum_{i\in [3]}\left(x_i - \frac{1}{3}\right)^2 
        \le  \frac{7 \varepsilon}{6} 
        \quad\text{and}\quad 
        \max\{y_1,~y_2,~y_3,~z\} 
        \le \frac{7 \varepsilon}{6}.
    \end{align*}
    The first inequality implies that $|x_i - 1/3|\le \sqrt{35 \varepsilon} \le 6 \varepsilon^{1/2}$ for every $i \in [3]$, which proves~\ref{LEMMA:weak-stability-1}.
    The inequality $\max\{y_1,~y_2,~y_3\} \le 7\varepsilon/6$ implies that for every $i \in [3]$,
    \begin{align*}
        \mathrm{N}(\mathbb{S}_{2}, \mathcal{H}[V_i])
        & = \frac{\alpha |V_i|^4}{24} + \frac{\varepsilon n^4}{24} - y_i n^4 \\
        & \ge \frac{\alpha |V_i|^4}{24} - \frac{9 \varepsilon}{8}n^4
        \ge \left(\frac{\alpha}{24} - \frac{9 \varepsilon}{8 (1/3-\varepsilon)^4}\right) |V_i|^4
        \ge \left(\frac{\alpha}{24} - 100 \varepsilon \right) |V_i|^4,
    \end{align*}
    which proves~\ref{LEMMA:weak-stability-3}.
    Here, we used the conclusion that $|V_i| \ge (1/3 - \varepsilon) n$ and $\varepsilon$ is sufficiently small.

    Now we consider the inequality $z \le 7 \varepsilon/6$. 
    Let $\beta \coloneqq |M|/n^3$. 
    Note that for every missing triple $e\in M$, there are exactly $|V_1| - 1 + |V_2| - 1 + |V_3| - 1 = n-3$ missing $\mathbb{S}_{2}$ (in $M\mathbb{S}_{2}$) containing $e$, while every missing $\mathbb{S}_{2}$ in $M\mathbb{S}_{2}$ contains at most two missing triples in $M$.
    It follows that 
    \begin{align*}
        \beta n^3 \cdot (n-3)
        = |M| \cdot (n-3)
        \le 2 \cdot |M\mathbb{S}_{2}|
        \le 2 \cdot 10 z n^4,
    \end{align*}
    which implies that 
    \begin{align*}
        \beta 
        \le \frac{20 z n^4}{n^3 (n-3)}
        \le 21 z
        \le 21 \cdot \frac{7 \varepsilon}{6}
        \le 25 \varepsilon. 
    \end{align*}
    This proves that $|M| \le 25 \varepsilon n^3$.
    
    Recall from the proof of Lemma~\ref{LEMMA:recursive-upper-bound} that the partition $V_1 \cup V_2 \cup V_3 = V(\mathcal{H})$ is maximum.  
    It follows from~\eqref{equ:S2-lower-bound-assume-b} and Proposition~\ref{PROP:max-cut-ratio-lower-bound} that $\mu_{\mathcal{H}}(V_1, V_2, V_3) \ge 0.918$. 
    Combining this with Proposition~\ref{PROP:B-vs-075M}, we obtain that $|B| \le 3|M|/4 \le 25 \varepsilon n^3$.
    This completes the proof of~\ref{LEMMA:weak-stability-2}. 
\end{proof}

We are now ready to present the proof of Theorem~\ref{THM:C5Minus-S2}~\ref{THM:C5Minus-S2-b}. 
\begin{proof}[Proof of Theorem~\ref{THM:C5Minus-S2}~\ref{THM:C5Minus-S2-b}]
    Let $\alpha \coloneqq \pi(\mathbb{S}_{2}, \{K_{4}^{3-}, C_5^{3-} \}) = 6/13$. 
    Fix $\varepsilon>0$. We may assume that $\varepsilon$ is sufficiently small. Let $\delta\coloneqq\varepsilon^{13}/1200$. Let $n$ be sufficiently large; in particular, we can assume that
    \begin{align*}
        \varepsilon n\ge \max\{N_{\ref{LEMMA:recursive-upper-bound}}(\delta),~N_{\ref{LEMMA:weak-stability}}(\delta)\},
    \end{align*}
    where $N_{\ref{LEMMA:recursive-upper-bound}}\colon (0,1) \to \mathbb{N}$ and $N_{\ref{LEMMA:weak-stability}}\colon (0, \varepsilon_{\ref{LEMMA:weak-stability}}) \to \mathbb{N}$ are the functions returned by Lemmas~\ref{LEMMA:recursive-upper-bound} and~\ref{LEMMA:weak-stability}, respectively. 

    We will prove by induction on $m$ that for all $m \le n$, every $m$-vertex $\{K_{4}^{3-}, C_5^{3-} \}$-free $3$-graph $\mathcal{H}$ satisfying
    \begin{align}\label{equ:induction-assumption}
        \mathrm{N}(\mathbb{S}_{2}, \mathcal{H})
        \ge\left(\alpha - \delta \left(\frac{n}{m}\right)^{12}\right)\frac{m^4}{24} 
    \end{align}
    can be transformed into a $T_{\mathrm{rec}}$-subconstruction by removing an edge set of size at most 
    \begin{align*}
        \frac{600 \delta m^{3}}{\varepsilon^{12}} +\frac{\varepsilon n^{2} m}{6}.
    \end{align*}
    The base case $m<\varepsilon n$ holds trivially, since 
    \begin{align*}
        \binom{m}{3} 
        \le \binom{\varepsilon n}{3} 
        \le \frac{\varepsilon^3 n^3}{6} 
        \le \frac{\varepsilon n^{2} m}{6}.
    \end{align*}
    So we may assume that $m \ge\varepsilon n$. 
    
    Let $\xi \coloneqq \delta(n/m)^{12}$, noting that 
    \begin{align*}
        \xi 
        = \delta\left(\frac{n}{m}\right)
        \geq \delta
        = \frac{\varepsilon^{13}}{1200} 
        \quad\text{and}\quad
        \xi 
        = \delta\left(\frac{n}{m}\right)
        \leq \delta\left(\frac{n}{\varepsilon n}\right)^{12} 
        \leq \frac{\delta}{\varepsilon^{12}}=\frac{\varepsilon}{1200} 
        \ll 1.
    \end{align*}
    Additionally, note that 
    \begin{align*}
        m 
        \geq \varepsilon n 
        \gg \max\{N_{\ref{LEMMA:recursive-upper-bound}}(\delta),~N_{\ref{LEMMA:weak-stability}}(\delta)\} 
        \geq \max\{N_{\ref{LEMMA:recursive-upper-bound}}(\xi),~N_{\ref{LEMMA:weak-stability}}(\xi)\}.
    \end{align*}
    Let $\mathcal{H}$ be an arbitrary $m $-vertex $\{K_{4}^{3-}, C_5^{3-} \}$-free $3$-graph satisfying~\eqref{equ:induction-assumption}, that is, $\mathrm{N}(\mathbb{S}_{2}, \mathcal{H}) \ge (\alpha - \xi) m^4/24$. 
    Applying Lemma~\ref{LEMMA:weak-stability} to $\mathcal{H}$, we obtain a partition $V(\mathcal{H})=V_{1} \cup V_{2} \cup V_{3}$ such that 
     \begin{enumerate}[label=(\roman*)]
          \item\label{item:stability-1} $|V_i| \in [m/5,~m/2]$ for every $i \in[3]$, 
          \item\label{item:stability-2} $|B|\le 25 \xi m^{3}$, where $B=B_{\mathcal{H}}(V_1,V_2,V_3)$,  and
          \item\label{item:stability-3} $\mathrm{N}(\mathbb{S}_{2}, \mathcal{H}[V_i]) \geq (\alpha - 2400 \xi)\left|V_{i}\right|^{4}/24$ for every $i \in[3]$.
      \end{enumerate}
    For every $i \in[3]$, since $|V_{i}| \le m/2$, it follows from~\ref{item:stability-3} and the definition of $\xi$ that 
    \begin{align*}
        \mathrm{N}(\mathbb{S}_{2}, \mathcal{H}[V_i]) 
        & \geq \left( \alpha - 2400 \xi \right){|V_i|^4}/{24} \\
        & = \left( \alpha - 2400 \delta\left(\frac{n}{m}\right)^{12} \right)\frac{|V_i|^4}{24} \\
        & \ge \left( \alpha - 2400 \delta\left(\frac{n}{2|V_i|}\right)^{12} \right)\frac{|V_i|^4}{24} 
        \ge \left( \alpha - \delta\left(\frac{n}{|V_i|}\right)^{12} \right)\frac{|V_i|^4}{24}. 
    \end{align*}
    It follows from the inductive hypothesis that each $\mathcal{H}[V_i]$ can be transformed into a $T_{\mathrm{rec}}$-subconstruction after removing an edge set $\mathcal{E}_{i}$ of size at most 
    \begin{align*}
        \frac{600 \delta |V_{i}|^{3}}{\varepsilon^{12}} + \frac{\varepsilon n^{2}|V_{i}|}{6}
    \end{align*}
    Combining this with~\ref{item:stability-2}, we conclude that $\mathcal{H}$ can be transformed into a $T_{rec }$-subconstruction after removing at most 
    \begin{align*}
        |B| + \sum_{i \in[3]}\left( \frac{600 \delta |V_{i}|^{3}}{\varepsilon^{12}} + \frac{\varepsilon n^{2}|V_{i}|}{6} \right)
        & \leq 25 \delta\left(\frac{n}{m}\right)^{12} m^{3}+3 \cdot \frac{600 \delta}{\varepsilon^{12}}\left(\frac{m}{2}\right)^{3}+\frac{\varepsilon n^{2} m}{6} \\
        & \leq 25 \delta\left(\frac{1}{\varepsilon}\right)^{12} m^{3}+\frac{3}{8} \cdot \frac{600 \delta}{\varepsilon^{12}} m^{3}+\frac{\varepsilon n^{2} m}{6} \\
        & = \frac{250 \delta}{\varepsilon^{12}} m^{3}+\frac{\varepsilon n^{2} m}{6} 
        \le \frac{600 \delta}{\varepsilon^{12}} m^{3}+\frac{\varepsilon n^{2} m}{6}
    \end{align*}
    edges. This completes the proof for the inductive step.

    Taking $m=n$ in~\eqref{equ:induction-assumption}, we conclude that every $n$-vertex $\{K_{4}^{3-}, C_5^{3-} \}$-free $3$-graph with $\mathrm{N}(\mathbb{S}_{2}, \mathcal{H}) \ge (\alpha - \delta)n^4/24$ is a $T_{\mathrm{rec}}$-subconstruction after removing at most 
    \begin{align*}
        \frac{600 \delta n^3}{\varepsilon^{12}} +\frac{\varepsilon n^{3}}{6} \leq \frac{\varepsilon n^{3}}{2}+\frac{\varepsilon n^{3}}{6}<\varepsilon n^{3}
    \end{align*}
    edges. This completes the proof of Theorem~\ref{THM:C5Minus-S2}.
\end{proof}

\section{Concluding remarks}\label{SEC:Remark}
Theorem~\ref{THM:Main-result-C5Minus-L2Norm} motivates the following natural extension, for which the method used in~\cite{BL24tightcycle} may be useful in the case of large $\ell$. 
\begin{problem}\label{PROB:C5Minus-Lp-Norm}
    Let $p \ge 3$ be an integer. 
    Determine $\pi_{\ell_p}(C_{\ell}^{3-})$ for all integers $\ell \ge 5$ satisfying $\ell \not\equiv 0 \pmod 3$.
\end{problem}

Let $F_{3,2}$ denote the $3$-graph on vertex set $[5]$ with edges 
\begin{align*}
    \big\{ \{1,2,3\},~\{1,2,4\},~\{1,2,5\},~\{3,4,5\} \big\}.
\end{align*}
Confirming a conjecture of Mubayi--R\"{o}dl~\cite{MR02}, F{\"u}redi--Pikhurko--Simonovits~\cite{FPS03,FPS05} determined the Tur\'{a}n density and the exact Tur\'{a}n number (for large $n$) of $F_{3,2}$.
In~\cite{BCL22a}, Balogh--Clemen--Lidick\'{y}, using computer-assisted flag algebra calculations, determined the $\ell_{2}$-norm Tur\'{a}n density of $F_{3,2}$ up to an additive error of order $10^{-9}$. 
Using the newly developed package, we are now able to determine the exact value of $\pi_{\ell_{2}}(F_{3,2})$. 
The script and certificate can be found at \href{https://drive.google.com/drive/folders/17CVMBJ60S81CSYK54fq4jyb0K3oa3Pnp?usp=share_link}{\url{https://drive.google.com/drive/folders/17CVMBJ60S81CSYK54fq4jyb0K3oa3Pnp?usp=share_link}}.

\begin{theorem}\label{THM:F32-L2-Norm}
    We have $\pi_{\ell_{2}}(F_{3,2}) = 1/8$. 
\end{theorem}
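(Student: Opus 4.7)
The plan is to establish $\pi_{\ell_2}(F_{3,2})=1/8$ by exhibiting a matching construction (lower bound) and obtaining the upper bound via flag algebras, following the computer-assisted strategy used for the main theorem.

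\textbf{Lower bound via a two-part construction.} For $y \in (0,1)$ I would partition $V = A \sqcup B$ with $|A| = \lfloor yn \rfloor$, and let $\mathcal{H}_{y,n}$ consist of all $3$-subsets of $V$ meeting $A$ in exactly one vertex. To verify $F_{3,2}$-freeness, suppose $\{1,\ldots,5\}$ spans a copy of $F_{3,2}$ with the pair $\{1,2\}$ lying in three edges. Writing $a = |\{1,2\} \cap A|$, the constraint that each of $\{1,2,3\}, \{1,2,4\}, \{1,2,5\}$ meets $A$ in exactly one vertex forces $i \in A$ iff $a = 0$. Hence either $\{3,4,5\} \subseteq A$ (when $a=0$) or $\{3,4,5\} \cap A = \emptyset$ (when $a=1$); in either case the fourth edge $\{3,4,5\}$ meets $A$ in $0$ or $3$ vertices, contradicting membership in $\mathcal{H}_{y,n}$. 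Codegrees are easy to read off: $0$ on pairs inside $A$, $|A|$ on pairs inside $B$, and $|B|-1$ on mixed pairs. Summing gives
\begin{align*}
    \norm{\mathcal{H}_{y,n}}_2 = \binom{|B|}{2}|A|^2 + |A||B|(|B|-1)^2 = \tfrac{1}{2}\, y(1-y)^2(2-y)\, n^4 + O(n^3).
\end{align*}
Maximising $y(1-y)^2(2-y)$ on $(0,1)$ (e.g.\ via the substitution $u = (1-y)^2$, reducing to maximising $u(1-u)$) yields the optimum $y = 1 - 1/\sqrt{2}$ with optimal value $1/4$, so $\norm{\mathcal{H}_{y,n}}_2 = (1/8) n^4 + O(n^3)$ and $\pi_{\ell_2}(F_{3,2}) \ge 1/8$.

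\textbf{Upper bound via flag algebras.} By the identity~\eqref{equ:S2-density-vs-L2-density}, proving $\pi_{\ell_2}(F_{3,2}) \le 1/8$ is equivalent to proving $\pi(\mathbb{S}_2, F_{3,2}) \le 3/2$. I would set up a flag-algebra semi-definite program in the theory of $F_{3,2}$-free $3$-graphs and seek a certificate of the form
\begin{align*}
    \tfrac{3}{2} - \rho(\mathbb{S}_2, \mathcal{H}) = \mathrm{SOS} + \sum_{F \in \mathcal{F}_m^0} c_F \cdot p(F,\mathcal{H}) + o(1),
\end{align*}
with each $c_F \ge 0$ and $\mathrm{SOS}$ arising from sums of squares of labelled flag combinations averaged over all admissible roots. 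Following the paper's conventions, I would use the authors' SageMath package to enumerate $m$-vertex $0$-flags for a suitable $m$ (the Balogh--Clemen--Lidick\'y bound achieved error $10^{-9}$ at small $m$, so $m \in \{6,7\}$ should suffice for exactness), exploit the invariant/anti-invariant block structure of the SDP matrices, and solve numerically.

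\textbf{Main obstacle.} The principal difficulty is rounding the numerical SDP output to an exact rational certificate that matches the bound $3/2$ on the nose. Because the extremal weight $1/\sqrt{2}$ is irrational, naive rationalisation of the SDP matrices will either destroy positive semi-definiteness or slightly inflate the bound. The standard remedy is to diagonalise in a basis aligned with the zero-eigenvectors forced by the extremal construction, round the matrix entries in the number field $\mathbb{Q}(\sqrt{2})$, and then verify positive semi-definiteness and the identity exactly. Once this rounding succeeds the remaining verification---that each $c_F \ge 0$ and that the identity holds---is a routine symbolic check, which is what the published certificate encodes.
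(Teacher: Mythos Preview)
Your proposal is correct and matches the paper's approach: the paper gives the same lower-bound construction $\mathbb{B}(V_1,V_2)$ (your $B$ and $A$ are their $V_1$ and $V_2$, and your optimum $y=1-1/\sqrt{2}$ corresponds exactly to their ratio $|V_1|:|V_2|=\sqrt{2}+1$), and for the upper bound the paper simply reports that the flag-algebra SDP, run through the authors' SageMath package, yields the exact value $1/8$ with a certificate posted online. Your discussion of the rounding issue over $\mathbb{Q}(\sqrt{2})$ is more explicit than anything the paper states, but is in line with what such a certificate must encode.
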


Let $V_1$ and $V_2$ be two disjoint sets, and let $\mathbb{B}(V_1, V_2)$ denote the $3$-graph with vertex set $V_1 \cup V_2$ consisting of all triples that contain exactly two vertices from $V_1$ (and hence exactly one vertex from $V_2$).
The lower bound in Theorem~\ref{THM:F32-L2-Norm} is achieved by the construction $\mathbb{B}(V_1, V_2)$ with the parts satisfy the ratio $|V_1| : |V_2| = \sqrt{2}+1 +o(1)$. 

Further analysis of the output from the flag algebra computations suggests that $F_{3,2}$ also exhibits the Erd\H{o}s--Simonovits-type stability property in the $\ell_2$-norm. 
Unfortunately, $F_{3,2}$ does not possess the vertex-extendability property introduced in~\cite{LMR23b,CL24} (see~{\cite[Construction~1.2]{FPS05}} for a counterexample), and thus the framework developed in~\cite{CL24} cannot be directly applied to determine the exact value of $\mathrm{ex}_{\ell_2}(n,F_{3,2})$ for large $n$. 
This makes the determination of $\mathrm{ex}_{\ell_2}(n,F_{3,2})$ somewhat more involved, and we hope to return to this topic in future work. 

\bibliographystyle{alpha}
\bibliography{C5minus}

\newcommand{\etalchar}[1]{$^{#1}$}
\begin{thebibliography}{dCSdOFS16}

\bibitem[BCL22a]{BCL22a}
J\'{o}zsef Balogh, Felix~Christian Clemen, and Bernard Lidick\'{y}.
\newblock Hypergraph {T}ur\'{a}n problems in {$\ell_2$}-norm.
\newblock In {\em Surveys in combinatorics 2022}, volume 481 of {\em London Math. Soc. Lecture Note Ser.}, pages 21--63. Cambridge Univ. Press, Cambridge, 2022.

\bibitem[BCL22b]{BCL22b}
J\'ozsef Balogh, Felix~Christian Clemen, and Bernard Lidick\'y.
\newblock Solving {T}ur\'an's tetrahedron problem for the {$\ell_2$}-norm.
\newblock {\em J. Lond. Math. Soc. (2)}, 106(1):60--84, 2022.

\bibitem[BHLP16]{BHLP16C5}
J{\'o}zsef Balogh, Ping Hu, Bernard Lidick{\'y}, and Florian Pfender.
\newblock Maximum density of induced $5$-cycle is achieved by an iterated blow-up of $5$-cycle.
\newblock {\em European J. Combin.}, 52:47--58, 2016.

\bibitem[BL24]{BL24tightcycle}
J{\'o}zsef Balogh and Haoran Luo.
\newblock {T}ur{\' a}n density of long tight cycle minus one hyperedge.
\newblock {\em Combinatorica}, 44(5):949--976, 2024.

\bibitem[BLLP24]{BLLP24}
Levente Bodn{\'a}r, Jared Le{\'o}n, Xizhi Liu, and Oleg Pikhurko.
\newblock The {T}ur{\' a}n density of the tight $5$-cycle minus one edge.
\newblock {\em arXiv preprint arXiv:2412.21011}, 2024.

\bibitem[BN12]{BN12}
B\'ela Bollob\'as and Vladimir Nikiforov.
\newblock Degree powers in graphs: the {E}rd{\H o}s-{S}tone theorem.
\newblock {\em Combin. Probab. Comput.}, 21(1-2):89--105, 2012.

\bibitem[BT11]{BaberTalbot11}
Rahil Baber and John Talbot.
\newblock Hypergraphs do jump.
\newblock {\em Combin. Probab. Comput.}, 20(2):161--171, 2011.

\bibitem[CIL{\etalchar{+}}24]{CILLP24}
Wanfang Chen, Daniel Il'kovi{\v{c}}, Jared Le{\'o}n, Xizhi Liu, and Oleg Pikhurko.
\newblock {N}ondegenerate {T}ur\'{a}n problems under {$(t,p)$}-norms.
\newblock {\em arXiv preprint arXiv:2406.15934}, 2024.

\bibitem[CL24]{CL24}
Wanfang Chen and Xizhi Liu.
\newblock Strong stability from vertex-extendability and applications in generalized {T}ur\'{a}n problems.
\newblock {\em arXiv preprint arXiv:2406.05748}, 2024.

\bibitem[CY00]{CY00}
Yair Caro and Raphael Yuster.
\newblock A {T}ur\'an type problem concerning the powers of the degrees of a graph.
\newblock {\em Electron. J. Combin.}, 7:Research Paper 47, 14, 2000.

\bibitem[CY04]{CY00arxiv}
Yair Caro and Raphael Yuster.
\newblock A {T}ur{\' a}n type problem concerning the powers of the degrees of a graph (revised).
\newblock {\em arXiv preprint math/0401398}, 2004.

\bibitem[dCSdOFS16]{SFS16}
Marcel~K. de~Carli~Silva, Fernando~M{\' a}rio de~Oliveira~Filho, and Cristiane~Maria Sato.
\newblock Flag algebras: a first glance.
\newblock {\em Nieuw Arch. Wiskd. (5)}, 17(3):193--199, 2016.

\bibitem[Erd64]{Erd64KST}
P.~Erd{\H o}s.
\newblock On extremal problems of graphs and generalized graphs.
\newblock {\em Israel J. Math.}, 2:183--190, 1964.

\bibitem[Erd67]{Erdos67a}
P.~Erd{\H{o}}s.
\newblock Some recent results on extremal problems in graph theory. {R}esults.
\newblock In {\em {Theory of Graphs (Internat. Sympos., Rome}, 1966)}, pages 117--123 (English); pp. 124--130 (French). Gordon and Breach, New York, 1967.

\bibitem[ES46]{ES46}
P.~Erd{\H o}s and A.~H. Stone.
\newblock On the structure of linear graphs.
\newblock {\em Bull. Amer. Math. Soc.}, 52:1087--1091, 1946.

\bibitem[ES66]{ES66}
P.~Erd{\H o}s and M.~Simonovits.
\newblock A limit theorem in graph theory.
\newblock {\em Studia Sci. Math. Hungar.}, 1:51--57, 1966.

\bibitem[FPS03]{FPS03}
Zolt{\'a}n F{\"u}redi, Oleg Pikhurko, and Mikl{\'o}s Simonovits.
\newblock The {T}ur{\'a}n density of the hypergraph {$\{abc, ade, bde, cde\}$}.
\newblock {\em Electron. J. Combin.}, 10:Research Paper 18, 7, 2003.

\bibitem[FPS05]{FPS05}
Zolt{\'a}n F{\"u}redi, Oleg Pikhurko, and Mikl{\'o}s Simonovits.
\newblock On triple systems with independent neighbourhoods.
\newblock {\em Combin. Probab. Comput.}, 14(5-6):795--813, 2005.

\bibitem[GGH{\etalchar{+}}22]{GilboaGlebovHefetzLinialMorgenstein22}
Shoni Gilboa, Roman Glebov, Dan Hefetz, Nati Linial, and Avraham Morgenstern.
\newblock On the local structure of oriented graphs---a case study in flag algebras.
\newblock {\em Electron. J. Combin.}, 29(3):Paper No. 3.39, 53, 2022.

\bibitem[GLMP24]{GLMP24}
Jun Gao, Xizhi Liu, Jie Ma, and Oleg Pikhurko.
\newblock Phase transition of degenerate {T}ur{\'{a}}n problems in {$p$}-norms.
\newblock {\em arXiv preprint arXiv:2411.15579}, 2024.

\bibitem[Gow07]{G07}
W.~T. Gowers.
\newblock Hypergraph regularity and the multidimensional {S}zemer{\' e}di theorem.
\newblock {\em Ann. of Math. (2)}, 166(3):897--946, 2007.

\bibitem[Kee11]{Kee11}
Peter Keevash.
\newblock Hypergraph {T}ur\'an problems.
\newblock In {\em Surveys in combinatorics 2011}, volume 392 of {\em London Math. Soc. Lecture Note Ser.}, pages 83--139. Cambridge Univ. Press, Cambridge, 2011.

\bibitem[KLP24]{KLP24}
Nina Kam{\v c}ev, Shoham Letzter, and Alexey Pokrovskiy.
\newblock The {T}ur{\'a}n density of tight cycles in three-uniform hypergraphs.
\newblock {\em Int. Math. Res. Not. IMRN}, (6):4804--4841, 2024.

\bibitem[KNS64]{KNS64}
Gyula Katona, Tibor Nemetz, and Mikl\'{o}s Simonovits.
\newblock On a problem of {T}ur\'{a}n in the theory of graphs.
\newblock {\em Mat. Lapok}, 15:228--238, 1964.

\bibitem[LMP24]{LMP24}
Bernard Lidicky, Connor Mattes, and Florian Pfender.
\newblock The hypergraph {T}ur{\'a}n densities of tight cycles minus an edge.
\newblock {\em arXiv preprint arXiv:2409.14257}, 2024.

\bibitem[LMR23]{LMR23b}
Xizhi Liu, Dhruv Mubayi, and Christian Reiher.
\newblock A unified approach to hypergraph stability.
\newblock {\em J. Combin. Theory Ser. B}, 158:36--62, 2023.

\bibitem[MPS11]{MPS11}
Dhruv Mubayi, Oleg Pikhurko, and Benny Sudakov.
\newblock Hypergraph {T}ur{\'a}n problem: some open questions.
\newblock In {\em AIM workshop problem lists, manuscript}, page 166, 2011.

\bibitem[MR02]{MR02}
Dhruv Mubayi and Vojt{\v e}ch R{\" o}dl.
\newblock On the {T}ur\'an number of triple systems.
\newblock {\em J. Combin. Theory Ser. A}, 100(1):136--152, 2002.

\bibitem[Raz07]{Raz07}
Alexander~A. Razborov.
\newblock Flag algebras.
\newblock {\em J. Symbolic Logic}, 72(4):1239--1282, 2007.

\bibitem[Raz10]{Razborov10}
Alexander~A. Razborov.
\newblock On 3-hypergraphs with forbidden 4-vertex configurations.
\newblock {\em SIAM J. Discrete Math.}, 24(3):946--963, 2010.

\bibitem[RNS{\etalchar{+}}05]{RNSS05}
V.~R{\" o}dl, B.~Nagle, J.~Skokan, M.~Schacht, and Y.~Kohayakawa.
\newblock The hypergraph regularity method and its applications.
\newblock {\em Proc. Natl. Acad. Sci. USA}, 102(23):8109--8113, 2005.

\bibitem[RS04]{RS04}
Vojt{\v e}ch R{\" o}dl and Jozef Skokan.
\newblock Regularity lemma for {$k$}-uniform hypergraphs.
\newblock {\em Random Structures Algorithms}, 25(1):1--42, 2004.

\bibitem[Sim68]{Sim68}
M.~Simonovits.
\newblock A method for solving extremal problems in graph theory, stability problems.
\newblock In {\em Theory of {G}raphs ({P}roc. {C}olloq., {T}ihany, 1966)}, pages 279--319. Academic Press, New York-London, 1968.

\bibitem[Tur41]{Tur41}
Paul Tur\'an.
\newblock Eine {E}xtremalaufgabe aus der {G}raphentheorie.
\newblock {\em Mat. Fiz. Lapok}, 48:436--452, 1941.

\end{thebibliography}
\end{document}